\newtheorem{theorem}{Theorem}[section]
\newtheorem{proposition}[theorem]{Proposition}
\newtheorem{lemma}[theorem]{Lemma}
\newtheorem{corollary}[theorem]{Corollary}
\theoremstyle{definition}
\newtheorem{definition}[theorem]{Definition}
\theoremstyle{remark}
\newtheorem{remark}[theorem]{Remark}
\newcounter{smalllist}
\numberwithin{equation}{section}
\newcommand{\abs}[1]{\left|#1\right|}
\newcommand{\norm}[1]{\left| \left|#1\right| \right|}
\newcommand{\FullBurgers}[1]{\partderiv{#1}{t}(x,t) + \Big( {#1}(x+h,t) \pm #1 (x-h,t)\Big)\partderiv{#1}{x}(x,t) = 0}
\newcommand{\Burgers}[1]{\partderiv{#1}{t} + #1 \cdot \nabla{#1} = \gamma \Delta{#1}}
\newcommand{\InviscidBurgers}[1]{\partderiv{#1}{t} + #1 \cdot \partderiv{#1}{x} = 0}
\newcommand{\InviscidBurgersNonlocal}[1]{{#1}_{t} + \big({#1}(x+h,t) \pm {#1}(x-h,t) \big) {#1}_x = 0}
\newcommand{\InviscidBurgersNonlocalPlus}[1]{{#1}_{t} + \big({#1}(x+h,t) + {#1}(x-h,t) \big) {#1}_x = 0}
\newcommand{\InviscidBurgersNonlocalMinus}[1]{{#1}_{t} + \big({#1}(x+h,t) - {#1}(x-h,t) \big) {#1}_x = 0}
\newcommand{\recursiveBurgersNonlocal}[1]{\partial_{t} #1_{n} + \calL#1_{n-1} \partial_{x} #1_{n}= 0}
\newcommand{\beq}{\begin{equation}}
\newcommand{\eeq}{\end{equation}}
\newcommand{\bal}{\begin{align}}
\newcommand{\eal}{\end{align}}
\newcommand{\bals}{\begin{align*}}
\newcommand{\eals}{\end{align*}}
\newcommand{\tht}{\theta}
\newcommand{\ffi}{\varphi}
\newcommand{\partialt}{\partial_{t}}
\newcommand{\partialx}{\partial_{x}}
\newcommand{\deriv}[2]{\frac{\intd{#1}}{\intd{#2}}}
\newcommand{\partderiv}[2]{\frac{\partial{#1}}{\partial{#2}}}
\newcommand{\intd}{\mathrm{d}}
\newcommand{\bbR}{{\mathbb{R}}}
\newcommand{\bbZ}{{\mathbb{Z}}}
\newcommand{\calL}{{\mathcal L}}
\title{Local Well-Posedness of A Nonlocal Burgers Equation}
\author{Hang Yang}
\thanks{1. Undergraduate at University of Illinois - Urbana-Champaign. Contact: \href{mailto:hyang62@illinois.edu}{hyang62@illinois.edu}}
\author{Sam Goodchild}
\thanks{2. Undergraduate at University of Wisconsin - Madison. Contact: \href{mailto:sgoodchild11692@gmail.com}{sgoodchild11692@gmail.com}}
\begin{document}

\begin{abstract}
	In this paper, we explore a nonlocal inviscid Burgers equation. Fixing a parameter $h$, we prove existence and uniqueness of the local solution of the equation $\InviscidBurgersNonlocal{u}$ with periodic initial condition $u(x,0) = u_0(x)$. We also explore the blow up properties of solutions to these kinds of equations with given periodic initial data, and show that there exists solutions that blow up in finite time and solutions that are globally regular. This contrasts with the classical inviscid Burgers equation, for which all non-constant smooth periodic initial data lead to finite time blow up.
Finally, we present results of simulations to illustrate our findings.
\end{abstract}
\maketitle

\section{Introduction} \label{Introduction}

The Burgers equation is a common equation that arises naturally in the study of fluid mechanics, traffic, and other fields. It is a relatively simple partial differential equation that has been extensively studied. In finite time, solutions to the inviscid Burgers equation is known to devellop shock waves and rarefaction for any smooth initial data. It also serves as an basic example of conservation laws. Many different closed-form, series approximation, and numerical solutions are known for particular sets of boundary conditions.

The more general form of dissipative Burgers equation looks like \begin{align}  \Burgers{u},\label{Burgers} \end{align} where $u(x,t)$ represents the velocity at point $(x,t) \in \bbR \times \bbR^{+}$, $\gamma \in \bbR^+$, and the term on the right hand side is the viscosity term which induces diffusion properties. For the inviscid 1D case, Burgers equation reduces to \begin{align} \InviscidBurgers{u}. \label{InviscidBurgers} \end{align}

The equation that we will be studying is \begin{align} \FullBurgers{u} \label {FullBurgers} \end{align} As we can see in the equation, which is a generalized form of the trivial 1D Burgers' equation, it introduces nonlocal factors. Unlike the local Burgers equation, analytical solutions are extremely hard to discover for this kind of equation. Even so, existence of solution cannot be easily derived from the method of characteristics also. If we look at the characteristics, which are defined by $dx/dt = u(x+h,t) \pm u(x-h,t)$, they are hard to analyze due to the nonlocality. 

In Section \ref{Existence and Uniqueness Of Solution}, we prove the following two theorems, illustrating the existence and uniqueness of classical local solutions for periodic initial data $u(x,0) = u_0(x)$. 

\begin{theorem}[Local Existence] \label {LocalExistence}
	Suppose $u_0 \in C^\infty(\bbR)$ with period $L$. Then there exists a classical local solution $u(x,t)$ to \eqref{BurgersNonlocal} for $0 \leq t \leq T(u_0)$, for some $T(u_0)>0$.
\end{theorem}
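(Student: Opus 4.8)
The plan is to construct the solution by a Picard-type iteration coupled with a priori energy estimates in the periodic Sobolev spaces $H^s(\bbR/L\bbZ)$. Writing $\calL$ for the nonlocal operator $\calL w = w(x+h) \pm w(x-h)$, I would set $u_0(x,t) = u_0(x)$ and define the iterates by solving, at each stage, the \emph{linear} transport problem $\recursiveBurgersNonlocal{u}$ with $u_n(x,0) = u_0(x)$. Once $u_{n-1}$ is smooth the coefficient $\calL u_{n-1}$ is a known smooth function, so each such problem is solved by the method of characteristics: along the flow $\dot X = \calL u_{n-1}(X,t)$ the iterate $u_n$ is constant, which yields a unique smooth $L$-periodic $u_n$. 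The work then reduces to showing this sequence converges to a classical solution on a time interval $[0,T(u_0)]$ that does not shrink to zero.

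First I would establish a uniform high-norm bound. Applying $\Lambda^s = (1-\partialx^2)^{s/2}$ to the transport equation and pairing with $\Lambda^s u_n$, I obtain an energy inequality of the form $\tfrac{d}{dt}\norm{u_n}_{H^s}^2 \lesssim \big(\norm{\partialx \calL u_{n-1}}_{L^\infty} + \norm{\calL u_{n-1}}_{H^s}\big)\norm{u_n}_{H^s}^2$, where the commutator $[\Lambda^s,\calL u_{n-1}]\partialx u_n$ is controlled by a Kato--Ponce estimate. The decisive structural point is that translation is an isometry on periodic Sobolev spaces, so $\norm{\calL u_{n-1}}_{H^s}\le 2\norm{u_{n-1}}_{H^s}$, and likewise $\norm{\partialx\calL u_{n-1}}_{L^\infty}\lesssim \norm{u_{n-1}}_{H^s}$ via Sobolev embedding once $s>3/2$. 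Setting $M = 2\norm{u_0}_{H^s}$, a Gr\"onwall argument shows that if $\norm{u_{n-1}}_{H^s}\le M$ on $[0,T]$ then $\norm{u_n}_{H^s}\le \norm{u_0}_{H^s}\,e^{CMT}$; choosing $T=T(u_0)$ small enough that $e^{CMT}\le 2$ closes the induction and bounds every iterate by $M$ on $[0,T]$.

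Next I would prove the iterates are Cauchy in a lower norm. The difference $v_n = u_n - u_{n-1}$ satisfies $\partialt v_n + \calL u_{n-1}\,\partialx v_n = -\,\calL(v_{n-1})\,\partialx u_{n-1}$, and an $L^2$ energy estimate gives $\tfrac{d}{dt}\norm{v_n}_{L^2}\lesssim C(M)\norm{v_{n-1}}_{L^2}$ on $[0,T]$, using the uniform bounds from the previous step to control $\norm{\partialx u_{n-1}}_{L^\infty}$ and $\norm{\partialx\calL u_{n-1}}_{L^\infty}$. After shrinking $T$ if necessary this is a contraction, so $u_n\to u$ in $C([0,T];L^2)$. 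Interpolating this strong low-norm convergence against the uniform $H^s$ bound upgrades it to convergence in $H^{s'}$ for every $s'<s$, which suffices to pass to the limit in the nonlocal first-order equation and to identify $u$ as a solution attaining the initial data, while the uniform bound gives $u\in L^\infty([0,T];H^s)$.

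I expect the main obstacle to be the high-norm energy estimate, specifically arranging the commutator so that only $\norm{\partialx\calL u_{n-1}}_{L^\infty}$ rather than a higher derivative appears on the right-hand side; this is exactly what keeps the estimate closed and the existence time uniform in $n$. The nonlocality is in fact benign here: because $\calL$ is a bounded, translation-based operator that commutes with $\partialx$ and preserves every periodic Sobolev norm up to the factor $2$, it causes no loss of derivatives, and the scheme proceeds essentially as for the local quasilinear transport equation. Finally, a standard bootstrap, using the equation to trade regularity in $t$ for regularity in $x$, promotes $u$ from $H^s$ to $C^\infty$, which yields the claimed classical local solution on $[0,T(u_0)]$.
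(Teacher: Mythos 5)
Your proposal is correct, and its skeleton is the same as the paper's: a Picard iteration on the linearized transport problem $\recursiveBurgersNonlocal{u}$ solved by characteristics, a uniform-in-$n$ high-norm energy bound on a time interval depending only on $\norm{u_0}_{H^m}$, and passage to the limit. The differences are in the technical execution, and one of them is substantive. For the energy estimate, the paper expands $\partialx^m\left(\calL u_{n-1}\,\partialx u_n\right)$ by the Leibniz rule and controls each term with Gagliardo--Nirenberg and Sobolev embedding, then closes the uniform bound by comparison with a dominating ODE sequence $f_n'=C\sqrt{f_{n-1}}\,f_n$; your Kato--Ponce commutator plus Gr\"onwall induction with $M=2\norm{u_0}_{H^s}$ reaches the same inequality $\frac{d}{dt}\norm{u_n}_{H^s}^2 \leq C\norm{u_{n-1}}_{H^s}\norm{u_n}_{H^s}^2$ by a cleaner route, and both exploit the same structural fact that translation is an isometry, so $\norm{\calL u_{n-1}}_{H^s}\leq 2\norm{u_{n-1}}_{H^s}$. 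The genuine divergence is the convergence step: the paper bounds $\norm{du_n/dt}_{H^{m-1}}$ uniformly and invokes a compactness criterion to extract a subsequence $u_{n_j}$ converging in $C([0,T],H^r)$, $r<m$, whereas you prove the full sequence is Cauchy in $C([0,T];L^2)$ via the contraction estimate for $v_n=u_n-u_{n-1}$ and upgrade to $H^{s'}$, $s'<s$, by interpolation. Your route buys convergence of the whole sequence with no subsequence extraction, and the difference estimate you prove is essentially the same computation the paper must redo later to establish uniqueness (its Theorem \ref{uniqueness}), so your argument packages existence and the uniqueness mechanism together; the paper's compactness route avoids estimating differences at the cost of a weaker conclusion at that stage. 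One caveat: your final claim that a bootstrap ``trading regularity in $t$ for regularity in $x$'' yields $C^\infty$ is not quite right for a transport equation, which does not regularize; smoothness of the solution for all orders follows instead from persistence of regularity (the $H^{m'}$ norm for any $m'>m$ stays finite as long as the $H^m$ norm does, with existence time controlled by the fixed lower norm). This does not affect the theorem as stated, since a classical solution only requires $H^r$ regularity for $r$ large, which is exactly what the paper settles for.
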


\begin{theorem}[Uniqueness] \label{uniqueness} \label {uniqueness}
The solution $u(x,t)$  to equation \eqref{BurgersNonlocal} which is in $C^1([0,T], H^r)$ for large enough $r$ is unique.
\end{theorem}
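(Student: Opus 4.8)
The plan is to run a standard energy estimate and close it with Grönwall's inequality. Abbreviate the nonlocal coefficient by $\calL u(x,t) := u(x+h,t) \pm u(x-h,t)$, so the equation reads $u_t + (\calL u)\, u_x = 0$. Suppose $u$ and $v$ are two solutions in $C^1([0,T], H^r)$ sharing the same periodic initial datum $u_0$, and set $w := u - v$, so that $w(\cdot,0) = 0$. Subtracting the two equations and inserting the telescoping term $(\calL u)\, v_x$ regroups the nonlinearity as $(\calL u)\, u_x - (\calL v)\, v_x = (\calL u)\, w_x + (\calL w)\, v_x$, where I have used that $\calL$ is linear in its argument so that $\calL u - \calL v = \calL w$. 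Thus
\beq
w_t + (\calL u)\, w_x + (\calL w)\, v_x = 0 .
\eeq

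First I would multiply this identity by $w$ and integrate over one period $[0,L]$, which gives
\beq
\frac{1}{2}\frac{d}{dt}\norm{w}_{L^2}^2 = -\int_0^L (\calL u)\, w\, w_x \, dx - \int_0^L (\calL w)\, w\, v_x \, dx .
\eeq
For the first integral I integrate by parts, using periodicity to discard the boundary contributions and the identity $w\, w_x = \tfrac12 (w^2)_x$; this moves the derivative onto the coefficient and produces $\tfrac12 \int_0^L (\calL u_x)\, w^2 \, dx$, which is bounded by $\norm{\calL u_x}_{L^\infty}\norm{w}_{L^2}^2$.

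The second integral is the one place where the nonlocality must be handled with care. I would pull out $\norm{v_x}_{L^\infty}$ and then estimate the shifted terms directly: since $\abs{\calL w} \le \abs{w(x+h)} + \abs{w(x-h)}$, Cauchy--Schwarz together with the translation invariance of the $L^2$ norm on the torus gives $\int_0^L \big(\abs{w(x+h)} + \abs{w(x-h)}\big)\abs{w(x)}\, dx \le 2\norm{w}_{L^2}^2$. Collecting both bounds yields
\beq
\frac{d}{dt}\norm{w}_{L^2}^2 \le M \norm{w}_{L^2}^2 ,
\eeq
where $M$ depends on $\norm{u_x}_{L^\infty}$ and $\norm{v_x}_{L^\infty}$. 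Because $u, v \in C^1([0,T],H^r)$ with $r$ large enough, the Sobolev embedding $H^{r-1}\hookrightarrow L^\infty$ (valid once $r-1 > 1/2$, i.e.\ $r > 3/2$) makes these quantities bounded uniformly on $[0,T]$, so $M$ may be taken constant. Grönwall's inequality then forces $\norm{w(t)}_{L^2}^2 \le \norm{w(0)}_{L^2}^2 \, e^{Mt} = 0$, whence $w \equiv 0$ and $u = v$.

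The main obstacle is precisely the second, nonlocal term: unlike in the classical Burgers case, the factor $\calL w$ evaluates $w$ at the shifted points $x \pm h$, so one cannot integrate by parts to strip a derivative from it. The resolution is that no derivative needs to be removed — the term already carries no derivative on $w$, and the shifts are controlled directly by periodicity. In writing this up I would take care to state explicitly how large $r$ must be and to justify that the integration by parts is legitimate, which follows from the assumed $C^1([0,T],H^r)$ regularity.
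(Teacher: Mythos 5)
Your proof is correct, but it takes a genuinely different — and more elementary — route than the paper's. Both arguments start from the same decomposition of the nonlinearity, $(\calL u)\,u_x - (\calL v)\,v_x = (\calL u)\,w_x + (\calL w)\,v_x$, and both close with Gr\"onwall, but the paper runs the energy estimate at the level of $H^r$: it multiplies the equation for $w$ by $(-1)^r\partial_x^{2r}w$, expands via the Leibniz rule, and re-deploys the Gagliardo--Nirenberg and Sobolev-embedding machinery from the existence proof to reach $\tfrac{d}{dt}\norm{w}_{H^r}^2 \leq C\left(\norm{\tht}_{H^r}+\norm{\ffi}_{H^r}\right)\norm{w}_{H^r}^2$. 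You instead estimate the difference in the weak $L^2$ norm, using the high regularity of the two solutions only to bound the coefficients $\norm{\calL u_x}_{L^\infty}$ and $\norm{v_x}_{L^\infty}$ via Sobolev embedding; your key observation — that $\calL w$ carries no derivative of $w$, so the shifted factors can be controlled by Cauchy--Schwarz plus the translation invariance of the $L^2$ norm on the circle, with no integration by parts needed — is exactly what makes the low-norm estimate close. What each approach buys: yours is shorter, needs only $r > 3/2$, and in fact establishes uniqueness in the broader class of solutions with merely bounded spatial gradient, since only $\norm{u_x}_{L^\infty}$ and $\norm{v_x}_{L^\infty}$ enter the constant $M$; the paper's higher-norm estimate is heavier but yields a quantitative stability bound in $H^r$ itself (continuous dependence of $\norm{w(t)}_{H^r}$ on $\norm{w(0)}_{H^r}$), which is stronger information than uniqueness alone, even though the theorem only uses it with $w(0)=0$. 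One small point to make explicit in a write-up: after concluding $\norm{w(t)}_{L^2}=0$ for each $t$, note that $w(\cdot,t)\in H^r$ with $r>1/2$ embeds into continuous functions, so $w$ vanishes identically and not merely almost everywhere — the paper makes the same remark at the end of its proof.
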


We resort to functional analysis skills in Sobolev spaces. Basically, we use the original equation to generate a revursive sequence of functions and prove that in appropriately chosen Sobolev spaces, the sequence admits a unique limit that converges to a classical local solution. In Section \ref{Blow Up and Non Blow Up Properties} we look at blow up and non blow up of solutions in finite time, presenting examples of both cases and contrast with the local Burgers equation.Interestingly, owing to the nonlocality factors introduced,the blow up behavior of \eqref {FullBurgers} may contrast greatly with the normal Burgers equation \eqref{InviscidBurgers}.Finally, we use graphics to show simulations run on our equation in Section \ref{Simulations} to illustrate our results.

\section{Existence and Uniqueness Of Solution} \label{Existence and Uniqueness Of Solution}
Let us now consider the following nonlocal variation of Burgers equation:
\begin{eqnarray}
	\InviscidBurgersNonlocal{u}. \label{BurgersNonlocal}
\end{eqnarray}
In the following theorem, we prove that there exists a solution to this equation. We construct a sequence of functions $u_n(x,t)$ and show that $u_n(x,t)$ will be uniformly bounded in $C([0,T],H^m)$ with large $m$,
while $du_n/dt$ are also uniformly controlled. Thus, by well known compactness criteria, there exists a limit which we show solves the equation. Let us first define Sobolev spaces as follows:
\begin{definition}[Sobolev Norm]
	Let $u(x,t) \in C^{\infty}(\bbR)$ be periodic with period $L$, for some $m \in \bbZ^+$. Then the Sobolev norm is defined as
	\begin{align*}
		\norm{u (\cdot, t)}_{H^m}^2 &= \int\limits_{0}^{L} u(x,t) \left[(-\partial_{xx} )^m u(x,t) \right] \intd x \\&=  \int\limits_{0}^{L} \abs{\partialx^m u(x,t)}^2 \intd x
	\end{align*}
\end{definition}
\begin{remark}
	The Sobolev space $H^{m}([0,L])$ is the closure of $C^{\infty}([0,L])$ with respect to this norm. Observe that we will work with what is usually called the homogeneous Sobolev space $\dot{H}^m$.
\end{remark}

	We will prove Theorem \ref {LocalExistence} by proving Proposition \ref{convergentsubsequence} and Lemma \ref{existence} below.
	\begin{remark}
Throughout the rest of the paper, we will denote any universal constant by $C$, which does not depend on $u$ and may vary from line to line.
\end{remark}

\begin{proposition} \label{convergentsubsequence}
	Define a recursive sequence of functions $\{ u_n \}$ as follows:
\begin{eqnarray}
	\recursiveBurgersNonlocal{u}, & u_n(x,0) = u_0(x)\label{recursiveBurgersNonlocal}
\end{eqnarray}	
where $u_{n} = u_{n}(x,t)$ for $n\geq 1$, $\calL u_{n} = u_{n}(x+h,t) \pm u_{n}(x-h,t)$ is a short-hand notation,  and $u_0(x,t)= u_0 (x)$ is smooth. Then for all sufficiently large $m \in \bbZ^+$, there exists $T(\|u_0\|_{H^m})$ such that $\norm{u_n(\cdot,t)}_{C([0,T],H^m)} < C_1(T)$ and $\|du_n/dt\|_{C([0,T],H^{m-1})} \leq C_2(T)$, for all $0<t<T$. Moreover, there exists a subsequence $n_j$ such that $u_{n_j}(x,t)$ to $u(x,t)$ in $C([0,T],H^r)$ for any $r<m$. 
\end{proposition}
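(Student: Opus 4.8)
The plan is to establish the result by the standard energy-estimate-plus-compactness scheme for quasilinear transport equations, treating $\calL u_{n-1}$ as a prescribed transport velocity in the linear equation \eqref{recursiveBurgersNonlocal} for $u_n$. The key structural observation is that, for each fixed $t$, the nonlocal operator $\calL$ is a sum of two translations, hence an isometry commuting with $\partialx$; consequently $\norm{\calL u_{n-1}(\cdot,t)}_{H^s} \le 2\,\norm{u_{n-1}(\cdot,t)}_{H^s}$ for every $s$, so that $\calL u_{n-1}$ plays exactly the role of the velocity field in the classical Burgers energy estimate and the nonlocality introduces no genuinely new difficulty.

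First I would derive the basic $H^m$ energy inequality. Applying $\partialx^m$ to \eqref{recursiveBurgersNonlocal}, pairing with $\partialx^m u_n$ in $L^2([0,L])$, and expanding by the Leibniz rule, the top-order term in which all $m$ derivatives fall on $\partialx u_n$ is handled by integration by parts using periodicity:
\begin{align*}
\int_0^L \calL u_{n-1}\, \partialx^{m+1} u_n\, \partialx^m u_n \, \intd x = -\tfrac12 \int_0^L \partialx(\calL u_{n-1})\, \abs{\partialx^m u_n}^2 \, \intd x,
\end{align*}
which is bounded by $\tfrac12 \norm{\partialx \calL u_{n-1}}_{L^\infty}\norm{u_n}_{H^m}^2$. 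The remaining lower-order Leibniz terms are controlled by a Kato--Ponce (Moser-type) commutator estimate, and invoking the Sobolev embedding $H^{m-1}(\bbR/L\bbZ)\hookrightarrow L^\infty$, valid for $m-1>1/2$, I obtain
\begin{align*}
\frac{\intd}{\intd t}\norm{u_n(\cdot,t)}_{H^m}^2 \le C\,\norm{u_{n-1}(\cdot,t)}_{H^m}\,\norm{u_n(\cdot,t)}_{H^m}^2.
\end{align*}

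Next I would close the uniform bound by induction on $n$. Setting $R = 2\norm{u_0}_{H^m}$ and assuming inductively that $\norm{u_{n-1}(\cdot,t)}_{H^m}\le R$ on $[0,T]$, the differential inequality and Gronwall's lemma give $\norm{u_n(\cdot,t)}_{H^m}\le \norm{u_0}_{H^m}\,e^{CRt/2}$; choosing $T=T(\norm{u_0}_{H^m})$ small enough that $e^{CRT/2}\le 2$ forces $\norm{u_n(\cdot,t)}_{H^m}\le R$ on $[0,T]$, which closes the induction and yields $C_1(T)=R$. For the time-derivative bound I read $\partialt u_n = -\calL u_{n-1}\,\partialx u_n$ directly from \eqref{recursiveBurgersNonlocal} and estimate in $H^{m-1}$, using that $H^{m-1}$ is a Banach algebra (again for $m-1>1/2$): $\norm{\partialt u_n}_{H^{m-1}} \le C\,\norm{\calL u_{n-1}}_{H^{m-1}}\norm{\partialx u_n}_{H^{m-1}} \le C R^2 =: C_2(T)$.

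Finally, compactness follows from the Aubin--Lions--Simon lemma: $\{u_n\}$ is bounded in $C([0,T],H^m)$ while $\{\partialt u_n\}$ is bounded in $C([0,T],H^{m-1})$, the embedding $H^m\hookrightarrow H^r$ is compact on the torus for $r<m$, and the uniform time-derivative bound supplies equicontinuity in $t$; hence a subsequence $u_{n_j}$ converges in $C([0,T],H^r)$ for every $r<m$. I expect the main obstacle to be the energy estimate of the first step — specifically, confirming that the nonlocality does not spoil the crucial integration-by-parts cancellation and the commutator bounds. The resolution is precisely the isometry and commutation properties of $\calL$, which keep the entire argument structurally identical to the local Burgers case.
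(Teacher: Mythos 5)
Your proposal is correct and follows the same overall scheme as the paper (iterate the linearized transport equation, prove a uniform $H^m$ energy estimate, bound $\partial_t u_n$ via the algebra property of $H^{m-1}$, then extract a subsequence by Aubin--Lions/Arzel\`a--Ascoli compactness), but it diverges from the paper in the two technical steps. First, for the Leibniz terms in the energy estimate the paper proves the product bound by hand: it expands $\partialx^m(\calL u_{n-1}\,\partialx u_n)$, treats $l=0,1$ by integration by parts and Sobolev embedding, and handles $2\le l\le m$ by H\"older together with a Gagliardo--Nirenberg interpolation inequality, arriving at the same inequality $\frac{d}{dt}\norm{u_n}_{H^m}^2 \le C\norm{u_{n-1}}_{H^m}\norm{u_n}_{H^m}^2$; you instead quote the Kato--Ponce/Moser commutator estimate, which is legitimate and shorter but less self-contained. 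Second, and more substantively, you close the uniform bound by induction on $n$ inside the fixed ball $R=2\norm{u_0}_{H^m}$, using that under the inductive hypothesis the differential inequality is \emph{linear} in $\norm{u_n}_{H^m}^2$, so Gronwall plus a choice of $T$ with $e^{CRT/2}\le 2$ closes the loop; the paper instead constructs a majorizing sequence $f_n(t)\ge\norm{u_n}_{H^m}^2$ solving $f_n'=C\sqrt{f_{n-1}}\,f_n$, argues $f_{n-1}\le f_n$, and integrates the resulting Riccati-type inequality $f_n'\le Cf_n^{3/2}$ to get a uniform bound up to a time $T\sim\norm{u_0}_{H^m}^{-1}$. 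Your bootstrap is the more standard (Majda--Bertozzi-style) argument and avoids the paper's auxiliary monotonicity claim for the $f_n$; the paper's comparison-function route gives essentially the same $T$ but requires that extra bookkeeping. Two small points to tighten: $\calL$ is not itself an isometry (it is a sum of two translations, hence bounded with norm at most $2$ on every $H^s$, which is all you use), and you should say explicitly, as the paper does, that each $u_n$ exists globally, is smooth, and remains $L$-periodic because \eqref{recursiveBurgersNonlocal} is a linear transport equation with smooth coefficients, solvable by characteristics --- this is what licenses the integrations by parts over a period.
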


\begin{proof}
By method of characteristics, since $u_{n-1}(x,t)$ is a known smooth function, each $u_n(x,t)$ exists for all time. Also note that $u_n(x,t)$ has period $L$. This follows from an inductive argument because $\calL u_{0}$ has period $L$.  Let us multiply equation \eqref{recursiveBurgersNonlocal} by $\left(-1\right)^m \partialx^{2m} u_{n}$ to get \begin{eqnarray*} \left(-1\right)^m  \partialt u_{n} \partialx^{2m} u_{n} + \left(-1\right)^m \partialx^{2m} u_{n} \calL u_{n-1} \partialx u_{n} = 0. \end{eqnarray*}
Simplifying, we get \begin{eqnarray*} \partialt u_{n} \partialx^{2m} u_{n} = - \partialx^{2m} u_{n}  \calL u_{n-1} \partialx u_{n}. \end{eqnarray*}
Integrating this expression with respect to $x$ from $0$ to $L$ yields \begin{eqnarray*}  \int\limits_{0}^{L} \partialt u_{n} \partialx^{2m} u_{n} \intd x = - \int\limits_{0}^{L} \partialx^{2m} u_{n}  \calL u_{n-1} \partialx u_{n} \intd x. \end{eqnarray*}
We can then integrate by parts $m$ times and pull out the partial derivative with respect to time from the left hand side, giving \begin{eqnarray*} \deriv{}{t} \int\limits_{0}^{L} \left( \partialx^m u_n \right)^2 \intd x = 2\left(-1\right)^{m+1} \int\limits_{0}^{L} \partialx^{2m} u_{n} \calL u_{n-1} \partialx u_{n} \intd x.\end{eqnarray*}
The integral on the left hand side, by definition, is equal to $\norm{u_n (\cdot, t)}_{H^m ([0,L])}^2$, so we can write \begin{eqnarray*} \deriv{}{t} \norm{u_n (\cdot, t)}_{H^m}^2 \leq 2\abs{\int\limits_{0}^{L} \partialx^{2m} u_{n} \calL u_{n-1} \partialx u_{n} \intd x}. \end{eqnarray*}
Integrating by parts $m$ times and noting that all of the boundary terms vanish due to periodicity, we get \begin{align} \deriv{}{t} \norm{u_n (\cdot, t)}_{H^m}^2 \nonumber &\leq \abs{\int\limits_{0}^{L} \partialx^m \big( \calL u_{n-1} \partialx u_n\big) \partialx^m u_n \intd x} \\ &\leq \abs{\int\limits_{0}^{L} \sum\limits_{l=0}^m \left( \begin{array}{c} m \\ l \end{array} \right)\partialx^l \big( \calL u_{n-1} \big) \partialx^{m-l+1} u_n \partialx^m u_n \intd x} \nonumber \\ &\leq \sum\limits_{l=0}^m  \left( \begin{array}{c} m \\ l \end{array} \right)\abs{\int\limits_{0}^{L}  \partialx^l \big( \calL u_{n-1} \big) \partialx^{m-l+1} u_n \partialx^m u_n \intd x} \label{sumintegral} \end{align}

\begin{lemma} For all $0 \leq l \leq m$, \begin{align*} \abs{\int\limits_{0}^{L}  \partialx^l \big( \calL u_{n-1} \big) \partialx^{m-l+1} u_n \partialx^m u_n \intd x}  \leq C \norm{u_{n-1}}_{H^m} \norm{u_n}_{H^m}^2. \end{align*}
\end{lemma}
\begin{proof}
For the $l=0$ case, we can reduce this to the $l=1$ case using integration by parts:
\begin{align*}
	\abs{\int\limits_{0}^{L} \calL u_{n-1} \partialx^{m+1} u_n \partialx^m u_n \intd x} = C\abs{\int\limits_{0}^{L} \partialx \left(\calL u_{n-1}\right) \left(\partialx^m u_n\right)^2 \intd x}.
\end{align*}
When $l=1$ we get \begin{align*} \abs{\int\limits_{0}^{L} \partialx \left(\calL u_{n-1}\right) \left(\partialx^m u_n\right)^2 \intd x} & \leq \norm{\partialx \left( \calL u_{n-1}\right)}_{L^\infty} \cdot \abs{\int\limits_{0}^{L} \left(\partialx^m u_n\right)^2 \intd x}\\ & \leq \norm{\partialx \left( \calL u_{n-1}\right)}_{L^\infty} \cdot \int\limits_{0}^{L} \abs{\partialx^m u_n}^2 \intd x. \\ &= C \norm{\partialx \left( \calL u_{n-1}\right)}_{L^\infty} \cdot \norm{u_n}_{H^m}^2. \end{align*}
Applying the Sobolev embedding theorem, we have that for $m > 3/2$,  \begin{align*} \norm{\partialx \left(\calL u_{n-1}\right)}_{L^\infty} \leq C \norm{\partialx \left(\calL u_{n-1}\right)}_{H^{m-1}} \leq C \norm{\calL u_{n-1}}_{H^m}, \end{align*} and \begin{align*} \norm{\calL u_{n-1}}_{H^m} &= \norm{u_{n-1}(x+h,t) \pm u_{n-1}(x-h,t)}_{H^m} \\ &\leq 2 \norm{u_{n-1}}_{H^m}. \end{align*}
We can conclude \begin{align*} \abs{\int\limits_{0}^{L} \partialx \left(\calL u_{n-1}\right) \left(\partialx^m u_n\right)^2 \intd x} \leq C \cdot \norm{u_{n-1}}_{H^m} \cdot \norm{u_n}_{H^m}^2 \text{\indent for } m > \frac{3}{2}. \end{align*}
In general, by H\"older's inequality\nocite{McOwenPDE}, terms on the right hand side of \eqref{sumintegral}, for $l\neq 1$, are estimated by \begin{align} \abs{\int\limits_{0}^{L} \partialx^l \left(\calL u_{n-1} \right) \partialx^{m-l+1} u_n \partialx^m u_n \intd x} \leq  \norm{\partialx^l \left(\calL u_{n-1}\right)}_{L^{\frac{2(m-1)}{l-1}}}\cdot \norm{\partialx^{m-l+1} u_n}_{L^{\frac{2(m-1)}{m-l}}}\cdot \norm{\partialx^m u_n}_{L^2} \label{star star}\end{align}
Recall that Gagliardo-Nirenberg inequality (see e.g. \cite{DoeringNavierStokes})
has the form
\begin{align} \norm{\partialx^s f}_{L^{2m/s}} \leq C \norm{f}_{L^\infty}^{1-s/m} \norm{\partialx^m f}_{H^m}^{s/m} \indent \forall 1 \leq s \leq m. \label{Gagliardo}
\end{align}

Now by applying \eqref{Gagliardo} and the Sobolev embedding theorem, we can conclude the following two facts: \begin{align} \norm{\partialx^{m-l+1}u_n}_{L^{\frac{2(m-1)}{m-l}}} &= \norm{\partialx^{m-l}\left( \partialx u_n\right)}_{L^{\frac{2(m-1)}{m-l}}} \nonumber \\ &\leq C \cdot \norm{\partialx u_n}_{L^\infty}^{1-\frac{m-1}{m-l}}\cdot \norm{\partialx^m u_n}_{L^2}^{\frac{m-1}{m-l}} \nonumber \\ &\leq C \cdot \norm{\partialx^m u_n}_{H^{m-1}}^{1-\frac{m-1}{m-l}} \cdot \norm{\partialx u_n}_{H^{m-1}}^{\frac{m-1}{m-l}} \nonumber \\ &= C \cdot \norm{\partialx u_n}_{H^{m-1}} \nonumber \\ &= C \cdot \norm{u_n}_{H^m},   \label{b} \\  \nonumber \end{align}
\begin{align} \norm{\partialx^l \left(\calL u_{n-1}\right)}_{L^{\frac{2(m-1)}{l-1}}} &= \norm{\partialx^{l-1}\left[ \partialx \left( \calL u_{n-1}\right)\right] }_{L^{\frac{2(m-1)}{l-1}}} \nonumber \\ &\leq C \cdot \norm{\partialx \left(\calL u_{n-1} \right)}_{L^\infty}^{1-\frac{m-1}{l-1}} \cdot \norm{\partialx^m \left(\calL u_{n-1}\right)}_{L^2}^{\frac{m-1}{l-1}} \nonumber \\ &\leq C \cdot \norm{\partialx \left(\calL u_{n-1}\right)}_{H^{m-1}}^{1-\frac{m-1}{l-1}} \cdot \norm{\partialx^m \left(\calL u_{n-1}\right)}_{H^{m-1}}^{\frac{m-1}{l-1}} \nonumber \\ &= C \cdot \norm{\partialx \left(\calL u_{n-1}\right)}_{H^{m-1}} \nonumber \\ &= C \cdot \norm{\calL u_{n-1}}_{H^m} \nonumber \\ &\leq C \cdot \norm{u_{n-1}}_{H^m}. \label{c}\end{align}
Plugging \eqref{b} and \eqref{c} into \eqref{star star}, we get \begin{align*} \abs{\int\limits_{0}^{L} \partialx^l \left(\calL u_{n-1} \right) \partialx^{m-l+1} u_n \partialx^m u_n \intd x} &\leq C \norm{u_{n-1}}_{H^m} \cdot \norm{u_n}_{H^m}^2, \end{align*} with constant $C$ which depends only on $m$, so we have proved the lemma.
\end{proof}

Now let \begin{align*} f_0 (t) = f_n(0) = \norm{u_0(\cdot)}_{H^m}^2. \end{align*}
Notice, by definition, \begin{align*} \norm{u_n (\cdot,0)}_{H^m} = \norm{u_0(\cdot)}_{H^m}. \end{align*}
If we define $f_n(t)$ inductively by \begin{align*} f_n ' (t) = C(m) \sqrt{f_{n-1}(t)}f_n(t), \end{align*} then, given \begin{align*} \deriv{}{t} \norm{u_n(\cdot,t)}_{H^m}^2 \leq C(m) \norm{u_{n-1}(\cdot,t)}_{H^m} \cdot \norm{u_n(\cdot,t)}_{H^m}^2, \end{align*} we conclude \begin{align*} f_n(t) \geq \norm{u_n(\cdot,t)}_{H^m}^2. \end{align*}
On the other hand, it is easy to see by induction that \begin{align*} f_{n-1}(t) \leq f_n(t)\end{align*}
Thus \begin{align*} f_n ' (t) = C \sqrt{f_{n-1} (t)} f_n (t) \leq C(m) f_{n}^{3/2} (t). \end{align*}

Because $f_n(t) \neq 0$, we can divide by $f_n^{3/2}(t)$ to get \begin{align*} \frac{f_n ' (t)}{f_n^{3/2}(t)} \leq C. \end{align*}
We can then integrate from $0$ to $t$, giving \begin{align*} \int\limits_{0}^{t} \frac{f_n ' (s)}{f_n^{3/2}(s)} \intd s &\leq \int\limits_{0}^{t}  C \, \intd t \\ -2 f_n^{-1/2} (t) + 2\left(\norm{u_0 (\cdot)}_{H^m}\right)^{-1/2}&\leq  C t \\ f_n^{1/2} (t)&\leq \frac{1}{\norm{u_0(\cdot)}_{H^m}^{-1/2} -  C t/2}. \end{align*}

If we let $T:= \left( C\sqrt{\norm{u_0 (\cdot)}_{H^m}}\right)^{-1}$, we can conclude that for any $0 \leq t \leq T$, $\{f_n (t) \}$ will be uniformly bounded by some constant $C(T)$. Since $\{f_n(t)\}$ is monotonically increasing, $f_n (t) \rightarrow f(t)$. But $f_n(t) \geq \norm{u_n (\cdot,t)}_{H^m}^2$ therefore
\begin{align*}
	\sup\limits_{t \in [0,T]} \norm{u_n(\cdot , t)}_{H^m(\bbR)} \leq C(T).
\end{align*}
Since $u_n$ satisfies \eqref{recursiveBurgersNonlocal}, and $H^s$ in dimension one is an algebra for every $s>1/2$, this bound implies also
\begin{align*}
	\norm{\deriv{u_n}{t}(\cdot,t)}_{H^{m-1}[0,L]} \leq C(T),
\end{align*}
if $m$ is large enough. By the well known compactness criteria, see e.g. \cite{ConstantinNavierStokes}, it follows that there is a subsequence $u_{n_j}(x,t)$ that converges to $u(x,t)$ in the $C([0,T],H^r)$ for any $r \leq m$.
\end{proof}

\begin{lemma} \label{existence}
	The solution $u(x,t)$ from Lemma \ref{convergentsubsequence} solves equation \eqref{BurgersNonlocal}, and belongs to $C^1([0,T], H^r)$ for all $r<m-1.$
\end{lemma}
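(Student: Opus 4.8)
The plan is to pass to the limit in the recursive equation \eqref{recursiveBurgersNonlocal}, which we rewrite as $\partialt u_n = -\calL u_{n-1}\,\partialx u_n$, and to identify the limit $u$ as a solution of \eqref{BurgersNonlocal}. The crux is to show that the nonlinear right-hand side converges to $-\calL u\,\partialx u$. Two obstacles stand in the way: first, Proposition \ref{convergentsubsequence} supplies only a subsequence $u_{n_j}\to u$, whereas the transport coefficient involves $u_{n_j-1}$, whose convergence is not a priori clear; second, passing to the limit inside a product requires strong convergence in a norm that controls the relevant derivatives.

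To dispose of the index mismatch I would first control the successive differences $w_n := u_{n+1}-u_n$ in a low-order norm. Subtracting the equations for $u_{n+1}$ and $u_n$ and regrouping gives $\partialt w_n + \calL u_n\,\partialx w_n + \calL w_{n-1}\,\partialx u_n = 0$, with $w_n(\cdot,0)=0$ because all the iterates share the initial datum $u_0$. Multiplying by $w_n$, integrating over $[0,L]$, integrating by parts in the transport term, and invoking the uniform $H^m$ bound of Proposition \ref{convergentsubsequence} together with the embedding $H^{m-1}\hookrightarrow L^\infty$ and the translation-invariance estimate $\norm{\calL w_{n-1}}_{L^2}\le 2\norm{w_{n-1}}_{L^2}$, I expect
\[
\deriv{}{t}\norm{w_n(\cdot,t)}_{L^2}^2 \leq C\,\norm{w_n(\cdot,t)}_{L^2}^2 + C\,\norm{w_{n-1}(\cdot,t)}_{L^2}\norm{w_n(\cdot,t)}_{L^2},
\]
with $C$ depending only on $T$ and $\norm{u_0}_{H^m}$. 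Writing $a_n(t)=\norm{w_n(\cdot,t)}_{L^2}$ and using $a_n(0)=0$, Gronwall's inequality yields $a_n(t)\le C\int_0^t a_{n-1}(s)\,\intd s$, and iterating produces the factorial bound $\sup_{[0,T]}a_n \le M(C'T)^n/n!$. Hence $\{u_n\}$ is Cauchy in $C([0,T],L^2)$, so the \emph{full} sequence (not merely the subsequence) converges in $L^2$ to $u$; in particular $u_{n-1}\to u$ in $L^2$ as well.

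Next I would upgrade this to the norm needed for the nonlinearity. Interpolating the uniform bound $\sup_t\norm{u_n}_{H^m}\le C(T)$ against the $L^2$ convergence through $\norm{v}_{H^r}\le C\norm{v}_{L^2}^{1-r/m}\norm{v}_{H^m}^{r/m}$ gives $u_n\to u$ and $u_{n-1}\to u$ in $C([0,T],H^r)$ for every $r<m$. Fixing $r$ with $3/2<r<m$, the embedding $H^{r-1}\hookrightarrow L^\infty$ and the fact that $H^{r-1}$ is an algebra let me conclude $\calL u_{n-1}\,\partialx u_n\to \calL u\,\partialx u$ in $C([0,T],H^{r-1})$. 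Thus both $u_n\to u$ and $\partialt u_n=-\calL u_{n-1}\,\partialx u_n\to-\calL u\,\partialx u$ converge uniformly in $C([0,T],H^{r-1})$; by the standard fact that uniform convergence of functions together with uniform convergence of their time derivatives identifies the derivative of the limit, $u$ is $C^1$ in time with $\partialt u=-\calL u\,\partialx u$, i.e.\ $u$ solves \eqref{BurgersNonlocal}.

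Finally, to obtain the sharp regularity I would bootstrap using the equation itself. Fix any $r<m-1$. Then $u\in C([0,T],H^{r+1})$, so $\partialx u\in C([0,T],H^{r})$ and $\calL u\in C([0,T],H^{r+1})$; since one factor is a full derivative smoother, the Sobolev product estimate gives $\calL u\,\partialx u\in C([0,T],H^{r})$. The equation then yields $\partialt u=-\calL u\,\partialx u\in C([0,T],H^{r})$, which together with $u\in C([0,T],H^{r})$ shows $u\in C^1([0,T],H^{r})$ for every $r<m-1$. I expect the second paragraph to be the main obstacle: establishing the difference estimate and its factorial decay is precisely what converts the subsequential convergence of Proposition \ref{convergentsubsequence} into convergence of the whole sequence and legitimizes replacing the lagged iterate $u_{n-1}$ by its limit inside the nonlinear term.
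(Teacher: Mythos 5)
Your proposal is correct, and it takes a genuinely different --- and more careful --- route than the paper at the decisive step. The paper's own proof asserts that Proposition \ref{convergentsubsequence} gives convergence of the whole sequence $u_n \to u$ in $C([0,T],H^r)$, estimates $\norm{\calL u_{n-1}\partialx u_n - \calL u\,\partialx u}_{H^s}$ via the algebra property of $H^s$ ($s>3/2$), then passes to the limit in the time-integrated form of \eqref{recursiveBurgersNonlocal} and differentiates; in particular it claims $\norm{u_{n-1}-u_n}_{H^s}\to 0$ ``by our choice of $m,r$ and $s$'' without proof. But the proposition as stated only supplies a subsequence $u_{n_j}\to u$, and, exactly as you observe, the lagged coefficient $u_{n_j-1}$ is then not controlled at all, nor are consecutive differences; the paper glosses over this. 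Your second paragraph supplies precisely the missing ingredient: the telescoping estimate on $w_n=u_{n+1}-u_n$ in $L^2$, Gronwall, and the factorial bound show the full sequence is Cauchy in $C([0,T],L^2)$, and interpolation against the uniform $H^m$ bound upgrades this to convergence in $C([0,T],H^r)$ for every $r<m$ --- the standard Picard-iteration closure, which makes the lemma rigorous where the paper's version has a gap. From there your handling of the nonlinearity coincides in spirit with the paper's (algebra property of $H^{r-1}$), while your endgame differs only cosmetically: you invoke the theorem on uniform convergence of derivatives, whereas the paper passes to the limit in the integral equation and then differentiates; both are legitimate. Your final bootstrap giving $u\in C^1([0,T],H^r)$ for $r<m-1$ is also sound (and the borderline cases $r\le 1/2$ follow from the case $1/2<r<m-1$ by the inclusion of higher Sobolev spaces into lower ones on the circle). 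In short: same overall skeleton, but your difference estimate is a genuine addition that repairs, rather than reproduces, the paper's argument, and as a by-product strengthens Proposition \ref{convergentsubsequence} from subsequential to full-sequence convergence.
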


\begin{proof}
Pick $m$ large enough; $m >7/2$ is sufficient for the argument below to work.
	We have the recursive formula for $u_n$ in equation \eqref{recursiveBurgersNonlocal} and we proved in Theorem \ref{convergentsubsequence} that $u_n$ converges to $u$ in $C([0,T],H^r)$ for every
$r<m-1.$ Then for some $s>3/2$ we have \begin{align*} \norm{\calL u_{n-1}\partialx u_n - \calL u \partialx u}_{H^s} &\leq \norm{\left( \calL u_{n-1}-\calL u \right) \partialx u_n}_{H^s} + \norm{\calL u \left( \partialx u_n - \partialx u \right)}_{H^s} \\ &\leq \norm{\calL u_{n-1} - \calL u}_{H^s} \norm{\partialx u_n }_{H^s} + \norm{\calL u}_{H^s} \norm{\partialx u_n - \partialx u }_{H^s} \\ &\leq C \norm{u_{n-1} - u_n}_{H^s} \norm{\partialx u_n}_{H^s} + C \norm{\calL u}_{H^s}\norm{ u_n - u }_{H^{s+1}}. \end{align*}
	By our choice of $m,r$ and $s$, we have \begin{align*} \norm{u_{n-1} - u_n}_{H^s} \rightarrow 0 \text{ uniformly in $x$ as } n \rightarrow \infty \\ \norm{u_n - u }_{H^{s+1}} \rightarrow 0 \text{ uniformly in $x$ as } n \rightarrow \infty. \end{align*}
	Thus \begin{align*} \norm{\calL u_{n-1}\partialx u_n - \calL u \partialx u}_{H^s} \rightarrow 0 \text{ uniformly in $x$ as } n \rightarrow \infty. \end{align*}
	Now, integrating \eqref{recursiveBurgersNonlocal} from $0$ to $t$, we have \begin{align} u_n(x,t) = u_n(x,0) - \int\limits_{0}^t \calL u_{n-1} \, \partialx u_n \, \intd s = u_0(x) - \int\limits_{0}^t \calL u_{n-1} \, \partialx u_n \, \intd s. \label{integral in proof} \end{align}
	Since by our choice of $r,$ $u_n \rightarrow u$ pointwise from \eqref{integral in proof} we conclude
	\begin{align*}
		u(x,t) = u_0(x) - \int\limits_{0}^t \calL u \, \partialx u \, \intd s
	\end{align*}
	Differentiation with respect to $t$ and simple estimate finishes the proof of the lemma.
\end{proof}
We have therefore proved that there exists a solution to our equation, \eqref{BurgersNonlocal}. We now prove uniqueness by considering two different solutions of our equation, $\tht(x,t)$ and $\ffi(x,t)$, and showing that their difference $w(x,t) = \tht(x,t) - \ffi(x,t) =0$ for all $t$ and $x$.

Next, we prove that the classical solution is also unique, which is indicated in Theorem \ref {uniqueness}.
\begin{proof} 
	Suppose $\tht$ and $\ffi$ are both solutions to equation \eqref{BurgersNonlocal} with initial data $u(x,0) = u_0(x)$. Then
	\begin{align}
		\tht_t + \calL \tht \tht_x &= 0 \label{solution1} \\
		\ffi_t + \calL \ffi \ffi_x &= 0. \label{solution2}
	\end{align}
	Let $w = \tht - \ffi$. Subtracting \eqref{solution2} from \eqref{solution1}, we get
	\begin{align*}
		\partialt w &= - \left( \calL \tht \tht_x - \calL \ffi \ffi_x \right) \\
		&=- \left( \calL \tht \tht_x - \calL \ffi \ffi_x \right)+\calL \tht \ffi_x - \calL \tht \ffi_x. \\
		&=- \calL \tht w_x - \calL w \ffi_x.
	\end{align*}
	We multiply by $(-1)^r \partialx^{2r} w$, integrate from $0$ to $L$, and integrate the left hand side by parts $r$ times, giving
	\begin{align}
		\deriv{}{t} \int\limits_{0}^{L}  \left( \partialx^r w \right)^2 \intd x &= (-1)^{r+1} \int\limits_{0}^{L} \partialx^{2r} w  \calL \tht \partialx w \intd x + (-1)^{r+1}  \int\limits_{0}^{L} \partialx^{2r} w\calL w \partialx \ffi \intd x, \,\,{\rm so} \nonumber \\
		\deriv{}{t} \norm{w}_{H^r}^2 &\leq \underbrace{\abs{\int\limits_{0}^{L} \partialx^{2r} w \calL \tht \partialx w \intd x}}\limits_{I_1} + \underbrace{\abs{\int\limits_{0}^{L} \partialx^{2r} w \calL w \partialx \ffi \intd x}}\limits_{I_2} \label{star star star}.
	\end{align}
	Integrating $I_1$ by parts $r$ times gives
	\begin{align*}
		\abs{\int\limits_{0}^{L} \partialx^{2r} w \calL \tht \partialx w \intd x} \leq \sum\limits_{l=0}^r \left(\begin{array}{c} m \\l \end{array}\right) \abs{\int\limits_{0}^{L}  \partialx^l \big( \calL \tht \big) \partialx^{r-l+1} w\partialx^r w \intd x}.
	\end{align*}
	Again, when $l=0$, we can reduce this to the $l=1$ case using integration by parts. When $l=1$, \begin{align*} I_1 = \abs{\int\limits_{0}^{L} \partialx^l \big( \calL \tht \big) \partialx^{r-l+1} w\partialx^r w \intd x} &= \abs{\int\limits_{0}^{L} \partialx \big( \calL \tht \big) \partialx^{r} w\partialx^r w \intd x} \\
	&= \abs{\int\limits_{0}^{L} \partialx \big( \calL \tht \big) \left( \partialx^{r} w\right)^2 \intd x} \\ & \leq \norm{\partialx \left( \calL \tht \right)}_{L^\infty} \cdot \int\limits_{0}^{L} \abs{\partialx^r w}^2 \intd x \\
	&\leq C \cdot \norm{\partialx \left( \calL \tht \right)}_{L^\infty} \cdot \norm{w}_{H^r}^2 \\
	&\leq C \cdot \norm{\tht}_{H^r} \cdot \norm{w}_{H^r}^2,
	\end{align*}
if $r-1> 1/2.$
When $l \neq 1$,
\begin{align*}
	I_1 = \abs{\int\limits_{0}^{L} \partialx^l \big( \calL \tht \big) \partialx^{r-l+1} w\partialx^r w \intd x} &\leq \norm{\partialx^l \left(\calL \tht \right)}_{L^{\frac{2(r-1)}{l-1}}}\cdot \norm{\partialx^{r-l+1} w}_{L^{\frac{2(r-1)}{r-l}}}\cdot \norm{\partialx^r w}_{L^2} \\
	& \leq C \norm{\tht}_{H^r} \cdot \norm{w}_{H^r}^2.
\end{align*}
as before.
We can therefore conclude that
\begin{align*}
	I_1 = \abs{\int\limits_{0}^{L} \partialx^{2r} w \calL \tht \partialx w \intd x} \leq C \norm{\tht}_{H^r} \cdot \norm{w}_{H^r}^2.
\end{align*}
The same process can be done to $I_2$ to determine a bound for the integral, giving the result
\begin{align*}
	I_2 = \abs{\int\limits_{0}^{L} \partialx^{2r} w \calL w \partialx \ffi \intd x} \leq C \norm{\ffi}_{H^r} \cdot \norm{w}_{H^r}^2.
\end{align*}
Thus, \eqref{star star star} becomes
\begin{align*}
\deriv{}{t} \norm{w}_{H^r}^2 &\leq C \norm{\tht}_{H^r} \cdot \norm{w}_{H^r}^2 + C \norm{\ffi}_{H^r} \cdot \norm{w}_{H^r}^2 \\
&= \norm{w}_{H^r}^2 \Big( C \norm{\tht}_{H^r} + C \norm{\ffi}_{H^r} \Big).
\end{align*}
Then by Gronwall's inequality, we have
\begin{align*}
	\norm{w(\cdot,t)}_{H^r} \leq \norm{w(\cdot,0)}_{H^r}\exp{\left(\int\limits_{0}^{t} \big( C \norm{\tht(\cdot,s)}_{H^r} + C \norm{\ffi(\cdot,s)}_{H^r} \big)\intd s\right)},
\end{align*}
but $\norm{w(\cdot,0)}_{H^r}=0$ because $\tht$ and $\ffi$ are solutions to the same Cauchy problem. Therefore, the difference $w =\tht-\ffi= 0$ a.e. Since $\tht$ and $\ffi$ are sufficiently smooth, they must be equal everywhere.
\end{proof}

\section{Blow Up and Non Blow Up Properties} \label{Blow Up and Non Blow Up Properties}
	Let us consider the following two subcases of equation $\eqref{BurgersNonlocal}$, where they both have initial data $u_0(x)$ of period $L$:
	\begin{align}
		\InviscidBurgersNonlocalPlus{u} \label{plus} \\
		\InviscidBurgersNonlocalMinus{u} \label{minus}
	\end{align}
	\begin{remark} \label{evenness and oddness conservation}
	Let us introduce the following notation: denote $u^h(x,t)$ to be the solution of an equation with spatial shift $h$. 	Looking at equation \eqref{minus}, it is not difficult to show using symmetry and uniqueness that if the smooth initial condition $u_0 (x)$ is even, the solution, while it remains smooth, will stay even in $x$. Also, $u^h(x,t) = u^{L-h}(x,t)$ for all periodic initial data. Now consider equation \eqref{plus}. If $u_0 (x)$ is odd, the solution will stay odd in $x$. Also, $u^h(x,t) = u^{L-h}(x,t),$ will hold for all even initial data $u_0 (x)$.
	\end{remark}
	
	These facts are easily seen from the existence and uniqueness of solutions,  definitions of evenness and oddness, and periodicity applied to our equation.

We now state the existence of solutions that blow up in finite time.
\begin{theorem}[Existence of Blow Up]
	There exists initial data $u_0(x) \in C^\infty(\bbR)$ such that the solution $u(x,t)$ to \eqref{BurgersNonlocal} blows up in finite time.
\end{theorem}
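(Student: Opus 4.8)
The plan is to produce blow-up not by fighting the nonlocality, but by \emph{choosing the initial data so that the nonlocal term collapses onto the local one}, thereby reducing \eqref{plus} exactly to the classical inviscid Burgers equation, whose finite-time gradient catastrophe is well known. I would work with the $+$ sign, i.e.\ equation \eqref{plus}; the $-$ sign is commented on at the end.

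First I would select $u_0\in C^\infty(\bbR)$ non-constant and $h$-periodic --- for instance $u_0(x)=\sin(2\pi x/h)$ --- and regard $L=h$ (or any integer multiple of $h$) as its period. Because $u_0$ has period $h$, we have $u_0(x+h)=u_0(x-h)=u_0(x)$, so $\calL u_0 = u_0(x+h)+u_0(x-h)=2u_0$. The key observation is that this identity is \emph{preserved by the flow}: let $v(x,t)$ be the classical solution of the Burgers equation $v_t+2v\,v_x=0$ with $v(\cdot,0)=u_0$. Since the Burgers flow preserves spatial periodicity, $v(\cdot,t)$ is $h$-periodic for every $t$ in its interval of existence, whence $v(x\pm h,t)=v(x,t)$ and therefore $\calL v = 2v$. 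Consequently $v$ satisfies $v_t+\calL v\,v_x = v_t+2v\,v_x=0$, i.e.\ $v$ is a solution of \eqref{plus} with the prescribed initial data. By the uniqueness result, Theorem \ref{uniqueness}, $v$ coincides with the solution $u$ of \eqref{plus} on the common interval on which both lie in $C^1([0,T],H^r)$.

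It then remains to invoke the classical blow-up for Burgers: along the characteristics $\dot X = 2v$ the value of $v$ is constant, and if $u_0'(x_*)<0$ for some $x_*$ (which holds for the sample datum above) neighbouring characteristics focus at the finite time $T^\ast = -1/(2\min_x u_0'(x))$, at which $\|v_x(\cdot,t)\|_{L^\infty}\to\infty$. Hence $u=v$ leaves $C^\infty$ (and every $H^m$ with $m$ large) at $T^\ast$, which is precisely the blow-up asserted. The main point requiring care --- the only genuine obstacle in an otherwise short argument --- is the rigorous justification of the reduction: one must verify that periodicity with period $h$ is genuinely propagated by the equation and that the uniqueness class of Theorem \ref{uniqueness} is compatible with the Burgers solution right up to $T^\ast$, so that the identification $u=v$ remains legitimate up to the blow-up time. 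Finally, it is worth recording that the $+$ sign is essential here: for the $-$ sign in \eqref{minus} the same $h$-periodic choice gives $\calL u = u(x+h)-u(x-h)\equiv 0$, so the equation degenerates to $u_t=0$ and the solution is stationary. This both explains the sign selection and foreshadows the non-blow-up phenomena analysed later.
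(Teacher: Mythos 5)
Your proposal is correct, and in fact the paper itself records your exact observation --- but only as a remark, not as its designated proof. Immediately after Lemma \ref{finite time blowup lemma} the authors note that when the period $L$ equals $h$, equation \eqref{plus} reduces to $u_t + 2u\,u_x = 0$, the classical Burgers equation, whose gradient catastrophe for non-constant smooth periodic data is standard; your write-up is a careful version of this, with the identification $u=v$ justified through Theorem \ref{uniqueness} on each interval $[0,T]$, $T<T^*$, plus a continuation argument --- the one genuine subtlety, which you correctly flag. The paper's main proof is genuinely different: it takes data of period $L=2h$ vanishing at $x=0$ and $x=h$, so that the nonlocal term does \emph{not} collapse onto the local one, uses Lemma \ref{boundarylemma} to show these zeros persist in time, differentiates the equation at the fixed points, and obtains the closed Riccati system $F_1'+2F_1F_2=0$, $F_2'+2F_1F_2=0$ for $F_1=u_x(0,t)$, $F_2=u_x(h,t)$, which blows up when both initial slopes are negative. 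What your route buys is brevity and minimal machinery; what it gives up is twofold. First, your example is degenerate --- the nonlocality never acts --- so it reveals nothing about blow-up driven by genuinely nonlocal dynamics, which is the phenomenon the paper is trying to exhibit. Second, if the theorem's $\pm$ is read as asserting blow-up for \emph{both} sign choices (the paper clearly intends this, since it supplies Lemma \ref{specific IC} for the minus sign), your argument covers only the plus case: as you yourself note, $h$-periodic data fed into \eqref{minus} yield $\calL u\equiv 0$ and a stationary solution, which the paper uses in the opposite direction as its non-blow-up example. The paper's fixed-point ODE technique has no such limitation and adapts to the minus-sign equation with period-$6h$ data.
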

We prove this result in Section \ref{Blow Up}. We first derive some properties of the solution.

\begin{lemma} \label{boundarylemma}
	Suppose $u(x,t)$ is a periodic solution of \eqref{BurgersNonlocal} with period $L=2h$. Let $u(0,0) = u(h,0) = 0$, then $u(0,t) = u(h,t)= 0,$ for all $t > 0$.
\end{lemma}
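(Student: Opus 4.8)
The plan is to exploit the fact that the hypothesis $L=2h$ collapses the nonlocal term. First I would observe that for an $L$-periodic function with $L=2h$ we have $u(x-h,t)=u(x-h+2h,t)=u(x+h,t)$, so the operator $\calL u = u(x+h,t)\pm u(x-h,t)$ reduces to $2u(x+h,t)$ in the $+$ case and to $0$ in the $-$ case. The $-$ case is then immediate: the equation degenerates to $\partialt u = 0$, so $u(x,t)=u_0(x)$ for all $t$, and in particular $u(0,t)=u(h,t)=0$ for all $t$.

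For the $+$ case I would track the two boundary values by setting $a(t):=u(0,t)$ and $b(t):=u(h,t)$ and reading off the reduced PDE $\partialt u = -2u(x+h,t)\,\partialx u$ at the two points $x=0$ and $x=h$. Using the periodicity identity $u(2h,t)=u(0,t)$, evaluation at these points yields the closed system
\[
a'(t) = -2\,b(t)\,\partialx u(0,t), \qquad b'(t) = -2\,a(t)\,\partialx u(h,t).
\]
The key structural point is that the coefficients $p(t):=\partialx u(0,t)$ and $q(t):=\partialx u(h,t)$ are fixed continuous functions of $t$, because the solution is smooth in $x$ and $C^1$ in $t$ by the existence theory of Section \ref{Existence and Uniqueness Of Solution}. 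Hence this is a \emph{linear} homogeneous system in the pair $(a,b)$.

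To finish, I would apply uniqueness for linear ODEs with continuous coefficients: since $(a(0),b(0))=(0,0)$, the only solution is $a\equiv b\equiv 0$. Equivalently, and in the spirit of the uniqueness argument already used in the paper, I would set $E(t)=a(t)^2+b(t)^2$, compute $E'(t)=-4\,ab\,(p+q)$, bound $\abs{E'(t)}\leq 2\,\abs{p(t)+q(t)}\,E(t)$ using $2\abs{ab}\leq a^2+b^2$, and conclude $E\equiv 0$ from Gronwall's inequality together with $E(0)=0$. Either route gives $u(0,t)=u(h,t)=0$ for all $t$.

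The argument is short, so the only real obstacle is conceptual rather than computational: recognizing that the assumption $L=2h$ is exactly what forces $u(x+h,t)=u(x-h,t)$ and thereby linearizes the evolution of the two boundary values (in the $+$ case) or trivializes it entirely (in the $-$ case). The remaining care is to confirm the regularity needed so that $p$ and $q$ are genuinely continuous in $t$; this is guaranteed because the solution lies in $C^1([0,T],H^r)$ for large $r$, which embeds into functions that are $C^1$ in $t$ and smooth in $x$, making the evaluations $\partialx u(0,t)$ and $\partialx u(h,t)$ well defined and continuous.
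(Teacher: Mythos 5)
Your proof is correct, but for the plus-sign case it takes a genuinely different route from the paper. The paper proves the lemma through its existence construction: it plugs $x=0,h$ into the iteration scheme \eqref{recursiveBurgersNonlocal}, shows by induction on $n$ that every iterate $u_n$ vanishes at $x=0$ and $x=h$ for all time (the coefficient $\calL u_{n-1}$ vanishes there by the induction hypothesis, so $\partialt u_n(0,t)=\partialt u_n(h,t)=0$), and then passes to the limit along the convergent subsequence. You instead work directly with the given solution $u$: you observe that $a(t)=u(0,t)$, $b(t)=u(h,t)$ satisfy the \emph{linear} homogeneous system $a'=-2p(t)b$, $b'=-2q(t)a$ with continuous coefficients $p(t)=\partialx u(0,t)$, $q(t)=\partialx u(h,t)$, and conclude $a\equiv b\equiv 0$ from linear ODE uniqueness or Gronwall. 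Your argument has a real advantage of fidelity to the statement: the lemma is asserted for an arbitrary periodic solution, and the paper's induction only covers the particular solution obtained as a limit of the $u_n$, so it tacitly needs the uniqueness theorem to transfer the conclusion; your argument needs no such detour and uses only the regularity of the given solution. The paper's version, in exchange, stays entirely inside the linear transport equations of the iteration, where the pointwise evaluations are unproblematic by construction. For the minus-sign case the two arguments coincide in substance: with period $L=2h$ one has $u(x-h,t)=u(x+h,t)$, so the coefficient $\calL u$ vanishes (you note this holds at every $x$, making the solution stationary; the paper only uses it at $x=0,h$), and the conclusion is immediate.
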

We can prove this by considering both the plus and minus cases as follows:
\begin{proof}
Let us first consider the plus sign case, \eqref{plus}. Plugging $x=0,h$ into to the recursive formula \eqref{recursiveBurgersNonlocal} for the plus case, we get
\begin{eqnarray*}
	&\partialt u_n (0,t) = -2u_{n-1}(h,t)\partialx u_n (0,t) \\
	&\partialt u_n (h,t) = -2u_{n-1}(0,t)\partialx u_n (h,t).
\end{eqnarray*}
Since $u(0,0) = u_0 (0) = u(h,0) = u_0(h) = 0$, we easily see $\partialt u_1 (0,t) = \partialt u_1 (h,t) = 0$, therefore $u_1$ is constant at $x=0,h$. But $u_1(0,0) = u_0(0) = 0$ and $u_1 (h,0) = u_0 (h) = 0$, so we have $u_1(0,t) = u_1(h,t) = 0$. Then, inductively, assume $u_{n-1}(0,t) = u_{n-1}(h,t) = 0$. Then, $\partialt u_n (0,t) = \partialt u_n (h,t) = 0$ so they are both constant. By the same reasoning, $u_n(0,0) = u_n(h,0) = 0$, therefore they are identically zero for all time. But our solution is just the limit of a subsequence of $u_n$, so $u(0,t) = u(h,t) = 0$

Now let us consider the minus sign case, \eqref{minus}. Plugging $x = 0$ into \eqref{minus}, we get
\begin{eqnarray*}
	u_t (0,t) = \Big[ u(h,t) - u(h,t)\Big]u_x (0,t) = 0,
\end{eqnarray*}
because $u(-h,t) = u(h,t)$ due to the period $L = 2h$. So $u(0,t) = C$, independent of time. Therefore, if we choose $u(0,0) = 0$, then $u(0,t) = 0$ for all $t \textgreater 0$. The same may be done at $u(h,0)$ to show that if $u(h,0) = 0$, then $u(h,t) = 0$.
\end{proof}

\begin{corollary} \label{all fixed stay fixed}
	Suppose $u_0(x)\in C^\infty(\bbR)$ has period $L=kh$ for some $k \in \bbZ$ and $u_0(mh) = 0$ for all $0 \leq m \leq k$. Then the solution to \eqref{BurgersNonlocal} satisfies $u(mh,t) = 0$ for all $t \geq 0$ and $0 \leq m \leq k$.
\end{corollary}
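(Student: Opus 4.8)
The plan is to mimic the proof of Lemma \ref{boundarylemma}, but carried out at the level of the recursive sequence $\{u_n\}$ from Proposition \ref{convergentsubsequence} and then passed to the limit, handling all grid points $mh$ simultaneously. First I would record a reduction coming from periodicity: since $L = kh$, the hypothesis $u_0(mh) = 0$ for $0 \le m \le k$ in fact yields $u_0(mh) = 0$ for \emph{every} $m \in \bbZ$, because the lattice $\{mh\}$ repeats modulo $k$. This matters because the nonlocal coefficient $\calL$ couples the value at $x = mh$ to the values at the neighboring lattice points $(m\pm 1)h$, and I want every one of these to be controlled.

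Next I would substitute $x = mh$ into the recursive formula \eqref{recursiveBurgersNonlocal}, obtaining for each $m \in \bbZ$ the ODE in $t$
\begin{align*}
\partialt u_n(mh, t) = -\big(u_{n-1}((m+1)h, t) \pm u_{n-1}((m-1)h, t)\big)\,\partialx u_n(mh, t).
\end{align*}
The key observation is that the coefficient is evaluated only at lattice points of $u_{n-1}$, and by the periodicity reduction both $(m+1)h$ and $(m-1)h$ are again points of the form $m'h$, so they are governed by the same inductive hypothesis.

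I would then induct on $n$ with the statement ``$u_{n-1}(m'h, t) = 0$ for all $m' \in \bbZ$ and all $t$.'' The base case $n = 1$ is exactly the periodicity reduction above. For the inductive step, the hypothesis forces $u_{n-1}((m+1)h, t) \pm u_{n-1}((m-1)h, t) = 0$ regardless of the sign, so the right-hand side of the ODE vanishes identically; hence $u_n(mh, \cdot)$ is constant in $t$, and since $u_n(mh, 0) = u_0(mh) = 0$ we conclude $u_n(mh, t) = 0$ for all $t$ and all $m$. This step is in fact cleaner than in Lemma \ref{boundarylemma}, since the coefficient vanishes outright and no bound on $\partialx u_n$ at the lattice points is required.

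Finally I would pass to the limit. Proposition \ref{convergentsubsequence} supplies a subsequence $u_{n_j} \to u$ in $C([0,T], H^r)$ for $r < m$; choosing $r > 1/2$, the Sobolev embedding $H^r \hookrightarrow C^0$ converts this into uniform, hence pointwise, convergence in $x$, so $u(mh, t) = \lim_j u_{n_j}(mh, t) = 0$ for each fixed $t$ and each $0 \le m \le k$. I do not expect a genuine obstacle: the argument is a direct induction, and the only points needing care are the periodicity reduction (which keeps the neighboring lattice points on the grid so the induction closes) and the appeal to Sobolev embedding to transfer the $H^r$ convergence down to the pointwise values.
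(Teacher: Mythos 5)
Your proposal is correct and takes essentially the same approach as the paper, which proves this corollary simply by pointing to Lemma \ref{boundarylemma}: the plus-sign case there is exactly your induction on the recursive sequence \eqref{recursiveBurgersNonlocal}, where the nonlocal coefficient vanishes at lattice points by the inductive hypothesis, each $u_n(mh,\cdot)$ is therefore constant in $t$ and hence zero, and the conclusion passes to the limit along the subsequence from Proposition \ref{convergentsubsequence}. Your uniform treatment of both signs via the recursive sequence is the right way to carry out the extension (the paper's separate minus-sign argument in Lemma \ref{boundarylemma}, where the coefficient vanishes by periodicity alone, works only for period $2h$), and your explicit appeal to the Sobolev embedding $H^r \hookrightarrow C^0$ for $r > 1/2$ just makes precise the paper's implicit final step.
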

The proof is similar to that from Lemma \ref{boundarylemma} extended for more general integers.

\subsection{Blow Up} \label{Blow Up}		
	Now we investigate the cases where $u_0(x)$ has period $L = 2h$ and $u_0(0) = u_0(h) = 0$, and derive the possibility of blow up.
\begin{lemma} \label{finite time blowup lemma}
	Consider the equation $\InviscidBurgersNonlocalPlus{u}$ with $u(x,0) = u_0(x) \in C^\infty(\bbR)$, period $L=2h$, and $u_0(0) = u_0(h) = 0$. Assume $u_x(0,0) < 0$ and $u_x(h,0) < 0$. Then the solution $u(x,t)$ blows up in finite time.
\end{lemma}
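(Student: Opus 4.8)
The plan is to reduce \eqref{plus} to a closed system of ordinary differential equations for the two quantities $a(t) := \partialx u(0,t)$ and $b(t) := \partialx u(h,t)$, and then to show that this system drives $a$ or $b$ to $-\infty$ in finite time, which is exactly the loss of smoothness we call blow up.

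First I would invoke Lemma \ref{boundarylemma}: since $u_0$ has period $L = 2h$ and $u_0(0) = u_0(h) = 0$, the two points $x=0$ and $x=h$ stay fixed, i.e. $u(0,t) = u(h,t) = 0$ for all $t$ in the interval of existence. Next I would differentiate \eqref{plus} in $x$, obtaining
\[ \partialx\partialt u + \big(\partialx u(x+h,t) + \partialx u(x-h,t)\big)\,\partialx u + \big(u(x+h,t) + u(x-h,t)\big)\,\partialx^2 u = 0, \]
and evaluate at $x = 0$ and at $x = h$. Using the period-$2h$ identities $u(-h,t)=u(h,t)$, $u(2h,t)=u(0,t)$ (and likewise for $\partialx u$) together with the vanishing supplied by Lemma \ref{boundarylemma}, the coefficient $u(x+h,t)+u(x-h,t)$ of the second-derivative term is zero at both points, so the $\partialx^2 u$ term drops out and the surviving relations collapse to the autonomous system $a'(t) = -2\,a(t)\,b(t)$ and $b'(t) = -2\,a(t)\,b(t)$, with $a(0)<0$ and $b(0)<0$ by hypothesis.

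The heart of the argument is the ODE analysis. Since $a' = b'$, the difference $d := a(0) - b(0)$ is conserved, and the sum $s(t) := a(t) + b(t)$ satisfies the Riccati equation $s' = -4ab = d^2 - s^2$, using $4ab = s^2 - d^2$. I would then observe that $s(0)^2 - d^2 = 4\,a(0)\,b(0) > 0$ while $s(0)<0$, hence $s(0) < -|d|$; from this starting point $s$ is strictly decreasing and $s^2 - d^2$ is increasing, so once $s^2 \ge 2d^2$ (which is reached in finite time because $s'$ is bounded above by the negative constant $d^2 - s(0)^2$ on the relevant range) a comparison with $\sigma' = -\tfrac12\sigma^2$ forces $s(t) \to -\infty$ at some finite $t^\ast > 0$. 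Equivalently one may integrate the Riccati equation explicitly to pin down $t^\ast$. Since $\max(|a(t)|,|b(t)|) \ge \tfrac12|s(t)| \to \infty$ as $t\to t^\ast$, the derivative $\partialx u$ becomes unbounded and the classical solution cannot be continued past $t^\ast$.

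I expect the only delicate point to be the bookkeeping that turns the differentiated PDE into the exact constants in the ODE system — in particular verifying, through the period-$2h$ identities, that the $\partialx^2 u$ term genuinely vanishes at $x=0,h$ and that the factor $2$ lands correctly, since an error there would change the blow-up criterion. Everything downstream is an elementary but decisive Riccati blow-up. A secondary remark worth one sentence is that the pointwise differentiation and evaluation above are legitimate precisely while the solution remains classical, which is the regime in which we are tracking the blow-up.
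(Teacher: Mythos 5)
Your route to the ODE system is exactly the paper's: both arguments invoke Lemma \ref{boundarylemma} to fix $u(0,t)=u(h,t)=0$, differentiate \eqref{plus} in $x$, use the period-$2h$ identities to kill the $u_{xx}$ coefficient at $x=0,h$, and arrive at $F_1'=-2F_1F_2$, $F_2'=-2F_1F_2$ (your $a,b$). Where you genuinely depart is in solving this system, and your version is the more complete one. The paper, after noting that $F_1-F_2$ is conserved, writes that ``since we assume $F_1=F_2$'' the constant is zero, reduces to $F_1'=-2F_1^2$, and reads off the explicit blow-up time $-1/(2u_x(0,0))$; but equality of the two initial slopes is nowhere among the lemma's hypotheses, so as written the paper's argument covers only the special case $u_x(0,0)=u_x(h,0)$. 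Your Riccati analysis --- conserved difference $d=a-b$, sum $s=a+b$ satisfying $s'=d^2-s^2$, the observation that $a(0),b(0)<0$ forces $s(0)<-|d|$, then linear decrease into the region $s^2\ge 2d^2$ followed by comparison with $\sigma'=-\sigma^2/2$ --- works under the stated hypotheses in full generality and thus closes that gap. The trade-off is only explicitness: the paper's symmetric reduction yields a clean closed-form blow-up time, while your comparison argument gives an upper bound on $t^\ast$ (though one can also integrate $s'=d^2-s^2$ explicitly to recover an exact time in the general case). Your two side remarks --- that the vanishing of the $u_{xx}$ term must be checked via $u(-h,t)=u(h,t)$, $u(2h,t)=u(0,t)$, and that the pointwise manipulations are valid precisely while the solution remains classical, so gradient blow-up at $x=0,h$ terminates the classical solution --- are correct and are the same justifications the paper leaves implicit.
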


\begin{proof}
	Note that in Theorem \ref{convergentsubsequence}, we proved that if the initial data $u_0(x)$ has period $2h$, then $u(x,t)$ will also have period $L = 2h$. Also, in this case, by Lemma \ref{plus}, $u(0,t) = u(h,t) = 0$.
	
	Differentiating the equation with respect to $x$ gives
	\begin{align}
		u_{tx} (x,t) &+ \left[ u_x(x+h,t) + u_x (x-h,t) \right] u_x(x,t) \nonumber \\ &+ \left[ u(x+h,t) + u(x-h,t)\right] u_{xx}(x,t) = 0.
	\end{align}
	Plugging in $x=0,h$ and noting that the last term vanishes, we get
	\begin{align*}
		u_{tx} (0,t) + 2u_x(h,t)u_x(0,t) &= 0 \\
		u_{tx} (h,t) + 2u_x(0,t)u_x(h,t) &= 0.
	\end{align*}
	Define $F_1(t) = u_x (0,t)$ and $F_2(t) = u_x(h,t)$. This gives the system
	\begin{align}
		F_1'+2F_1 F_2 = 0 \label{first ode} \\
		F_2'+2F_1 F_2 = 0. \label{second ode}
	\end{align}
	It is easy to see that $F_1 ' - F_2 ' = 0$, thus $F_1 - F_2 = A$, where $A$ is a constant. Since we assume $F_1 = F_2,$ we get that $A=0$. Plugging this in to \eqref{first ode} gives
	\begin{align*}
		F_1' + 2F_1^2 = 0.
	\end{align*}
	The solution to this differential equation is
	\begin{align*}
		F_1(t) = \frac{1}{\frac{1}{F_1(0)}+2t}.
	\end{align*}
	This blows up in finite time when $t = -\frac{1}{2F_1(0)} = -\frac{1}{2u_x(0,0)} > 0$. We can argue similarly for \eqref{second ode} to show that $F_2$ also blows up in finite time under the same conditions.
\end{proof}

\begin{remark}
	For instance, we can take $u(x,0) = u_0(x) = x(x-h)(x-2h)(-\frac{1}{2h^2} + \frac{3}{h^3}x - \frac{3}{2h^4}x^2)$, for $0 \leq x \leq 2h$. This satisfies our assumptions in Lemma \ref{finite time blowup lemma} and thus the
corresponding solution blows up in finite time.
\end{remark}

\begin{remark}
There is an obvious case of blow up for the plus sign equation when the period $L$ is just $h$. Equation \eqref{plus} reduces to \begin{align*} u_t + 2u \cdot u_x &= 0 \label{BurgersSimplified}. \end{align*} This is the typical Burgers equation, which is known to blow up in finite time for any non constant periodic initial condition $u_0 (x)$\cite{McOwenPDE}.
\end{remark}

\begin{lemma} \label{specific IC}
	Suppose $u_0$ has period $L = 6h$ and is even, and $u_0(kh) = 0$, $u_0'(3kh) = 0$ for all $k \in \bbZ$. Assume $\frac{\ln u_x(h,0) - \ln(-u_x(2h,0))}{u_x(h,0)+u_x(2h,0)} > 0$ and $ u_x(2h,0) < 0$. Then the solution $u(x,t)$ to the Cauchy problem,
	\begin{center}
		$\InviscidBurgersNonlocalMinus{u}$ \\
		$u(x,0) = u_0(x),$
	\end{center}
	blows up in finite time.
\end{lemma}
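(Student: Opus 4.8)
The plan is to mimic the strategy of Lemma \ref{finite time blowup lemma}: reduce the blow-up question to a closed finite system of ODEs for the values of $u_x$ at the grid points $kh$, and then exhibit finite-time blow-up of that system. First I would invoke Corollary \ref{all fixed stay fixed} with $k=6$: since $u_0$ has period $6h$ and $u_0(mh)=0$ for all integers $m$, the solution satisfies $u(mh,t)=0$ for all $t\geq 0$ and all $m$. Next, because $u_0$ is even, Remark \ref{evenness and oddness conservation} guarantees that $u(\cdot,t)$ stays even for as long as it remains smooth; hence $u_x(\cdot,t)$ is odd, and together with $6h$-periodicity this forces $u_x(0,t)=u_x(3h,t)=0$ as well as the identities $u_x(4h,t)=-u_x(2h,t)$ and $u_x(5h,t)=-u_x(h,t)$. (The hypothesis $u_0'(3kh)=0$ is just the $t=0$ manifestation of these symmetries.)

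I would then differentiate \eqref{minus} in $x$ to obtain
\[
u_{tx} + \big[u_x(x+h,t)-u_x(x-h,t)\big]u_x + \big[u(x+h,t)-u(x-h,t)\big]u_{xx} = 0,
\]
and evaluate at $x=h$ and $x=2h$. At these points the coefficient $u(x+h,t)-u(x-h,t)$ vanishes by the first step, so the $u_{xx}$ term drops out. Writing $F_1(t)=u_x(h,t)$ and $F_2(t)=u_x(2h,t)$ and using the symmetry identities above (so that the neighbouring derivatives at $0$ and $3h$ vanish), the system closes to
\[
F_1' = -F_1 F_2, \qquad F_2' = F_1 F_2 .
\]

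Finally I would integrate this system. Adding the two equations gives $F_1'+F_2'=0$, so $S:=F_1+F_2$ is conserved and equals $u_x(h,0)+u_x(2h,0)$. Substituting $F_2=S-F_1$ reduces the pair to the scalar equation $F_1'=F_1(F_1-S)$, which I would solve by separation of variables and partial fractions. The assumption $u_x(2h,0)<0$ together with the fact that $\ln u_x(h,0)$ must be real give $F_1(0)>0>F_2(0)$, so $F_1$ increases (and $F_2$ decreases) and integrating from $F_1(0)$ up to $+\infty$ yields the blow-up time
\[
t_\ast = \frac{1}{S}\ln\frac{F_1(0)}{-F_2(0)} = \frac{\ln u_x(h,0)-\ln(-u_x(2h,0))}{u_x(h,0)+u_x(2h,0)},
\]
which is precisely the quantity assumed positive in the statement. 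Thus $u_x(h,t)\to\infty$ as $t\uparrow t_\ast$, and the solution blows up in finite time.

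I expect the main obstacle to be the bookkeeping in the second step: verifying that evenness plus periodicity genuinely closes the ODE system on $F_1,F_2$ alone, i.e.\ that the contributions of $u_x$ at $0$ and $3h$ drop and those at $4h,5h$ reduce to $-F_2,-F_1$, and checking that the equations read off at all six grid points $x=0,h,\dots,5h$ are mutually consistent rather than overdetermined. Once the reduction $F_1'=F_1(F_1-S)$ is secured, matching the integrated blow-up time to the hypothesized expression is a routine calculation.
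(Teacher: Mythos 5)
Your proposal is correct and takes essentially the same route as the paper: persistence of the zeros at the grid points via Corollary \ref{all fixed stay fixed}, differentiation of \eqref{minus} at $x=h,2h$ to close the system $F_1'=-F_1F_2$, $F_2'=F_1F_2$, and integration of that system to obtain blow-up at exactly the time $B=\frac{\ln u_x(h,0)-\ln(-u_x(2h,0))}{u_x(h,0)+u_x(2h,0)}$ assumed positive in the hypothesis. The only cosmetic differences are that you derive $u_x(0,t)=u_x(3h,t)=0$ from preserved evenness plus periodicity (making the hypothesis $u_0'(3kh)=0$ redundant), where the paper runs an ODE argument in the style of Lemma \ref{boundarylemma}, and you extract the blow-up time by separation of variables rather than writing the explicit solution formula $F_1(t)=\frac{A\exp(AB)}{\exp(AB)-\exp(At)}$.
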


\begin{proof}
	By Lemma \ref{boundarylemma} and Corollary \ref{all fixed stay fixed}, we have $u(kh,t) = 0, \forall k \in \bbZ$ and $u(x,t)$ is even if $u_0(x)$ is even. Differentiating the equation with respect to $x$ gives
	\begin{align*}
		u_{tx} (x,t) + \left( u_x(x+h,t) - u_x(x-h,t) \right) u_x(x,t) + \left( u(x+h,t) - u(x-h,t) \right) u_{xx}(x,t) = 0.
	\end{align*}
Observe that $u_x(3kh,t)=0$ for all time by an argument similar to proof of Lemma \ref{boundarylemma}.
	Plugging in $x=h,2h$ gives
	\begin{align*}
		F_1'(t) + F_1(t) F_2(t) &= 0 \\
		F_2'(t) - F_1(t) F_2(t) &= 0,
	\end{align*}
	where $F_1(t) = u_x(h,t)$ and $F_2(t) = u_x(2h,t)$. Solving this system of ordinary differential equations gives
	\begin{align*}
		F_1(t) + F_2(t) &= F_{1}(0) + F_2(0) = A \\
		F_1'(t) &= F_1^2(t) - A F_1(t),
	\end{align*}
	for some constant $A$. The solution to this differential equation is
	\begin{align*}
		F_1(t) = \frac{A \exp{(A B)}}{\exp{(AB)}-\exp{(At)}},
	\end{align*}
	where $B = \frac{\ln F_1(0) - \ln(-F_2(0))}{F_1(0)+F_2(0)}$.
This blows up in finite time if $F_2(0) = u_x(2h,0) < 0$ and $B >0$.
\end{proof}
\begin{remark}
	For simplicity, take $h=4/3$. Then we can take $$u(x,0) = u_0(x) = \frac{16 (x-4)^2 (x+4)^2 x^2 (3x-8)(3x+8)(3x+4)(3x-4)}{3375(112 + 153x^2)}.$$ This satisfies our assumptions in Lemma \ref{specific IC} and thus blows up in finite time.
\end{remark}

\subsection{Non Blow Up} \label{Non Blow Up}
	We will now take specific initial data to \eqref{minus} and show that it cannot blow up in finite time. Let $u(x,t) = \sin\left(\frac{\pi x k}{h}\right),$ where $h$ is fixed and $k \in \bbZ$. Noting that $u_t = 0$ and $u(x+h,t) - u(x-h,t) = 0$ (by trigonometric identities), $u(x,t)$ solves the equation. We know $u(x,t) = \sin\left(\frac{\pi x k}{h}\right)$ never blows up.
	
	Similarly, for \eqref{plus}, we will take $u(x,t) = \sin \left( \frac{\pi x (k-1/2)}{h} \right)$, where $h$ is fixed and $k\in \bbZ$. Once again, noting that $u_t = 0$ and $u(x+h,t) + u(x-h,t) = 0$, $u(x,t)$ solves the equation. We also know that $u(x,t) =  \sin \left( \frac{\pi x (k-1/2)}{h} \right)$ never blows up. So we have found stationary solutions for both equations \eqref{plus} and \eqref{minus} that never blow up in finite time.
	So the nonlocal models are different from the Burgers equation where any nonconstant solution blows up in finite time: there exists non-trivial initial data for which solutions are globally regular for the nonlocal equation.
	
	We can also construct stationary solution to equation \eqref{minus} by setting the period $L$ to be $h$. The nonlocal terms become $u(x+h,t) = u(x-h,t) = u(x,t)$ so \eqref{minus} reduce to $u_t = 0$. This is constant in time. Therefore $u(x,t) = u_0 (x)$ for all $t$, so given a smooth initial condition, $u(x,t)$ will not blow up.

\section{Simulations} \label{Simulations}
In this section, we compare our model with the well known "local "Burgers equation \eqref{InviscidBurgers}. We used Matlab v2013 to run all simulations, with a forward in time, centered in space scheme. We illustrate many of the results of this paper in the graphics we generate.

We first look at the "local" Burgers equation, \eqref {InviscidBurgers}. We know that this leads to gradient catastrophe (i.e. blow up in gradient) in finite time for all non-constant smoooth initial data. We use $u(x,0) = \sin(\pi x)$ to generate Figure \ref{Burgers}.

\begin{figure}[H]
\centering
\begin{minipage}[h]{0.5\linewidth}
  \includegraphics[scale=0.2]{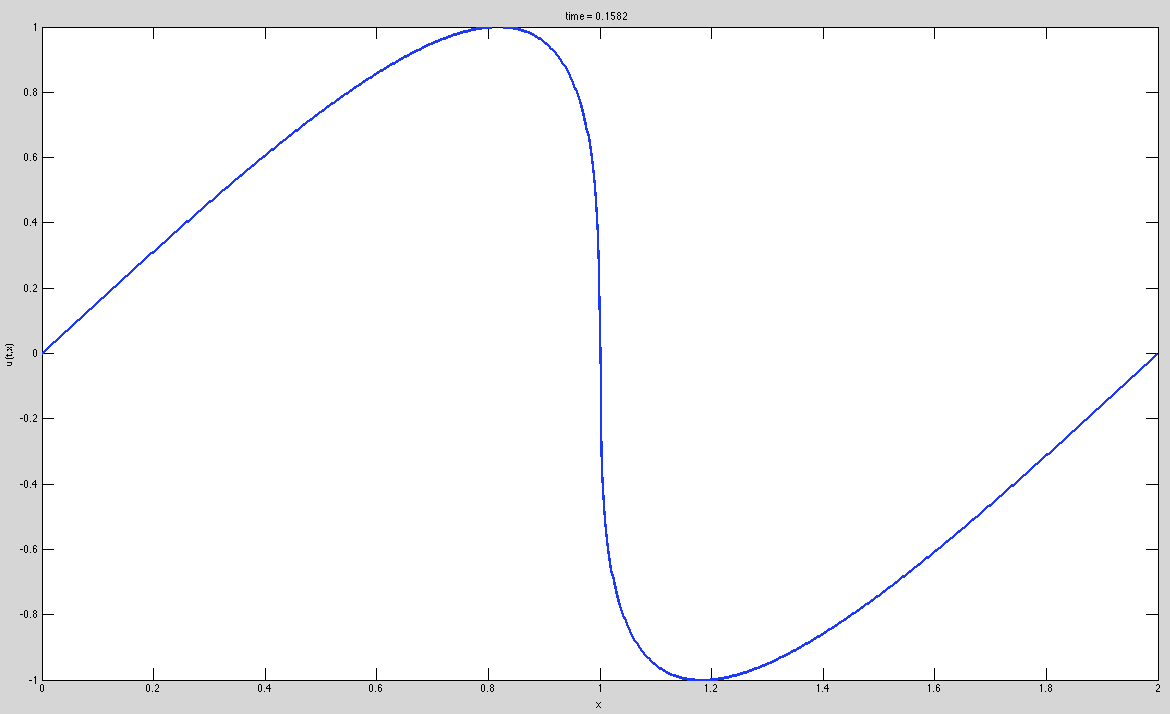}
  \caption{Local Burgers Equation, $h=0$}
  \label{Burgers}
\end{minipage}
\quad
\begin{minipage}[H]{0.45\linewidth}
  \includegraphics[scale=0.2]{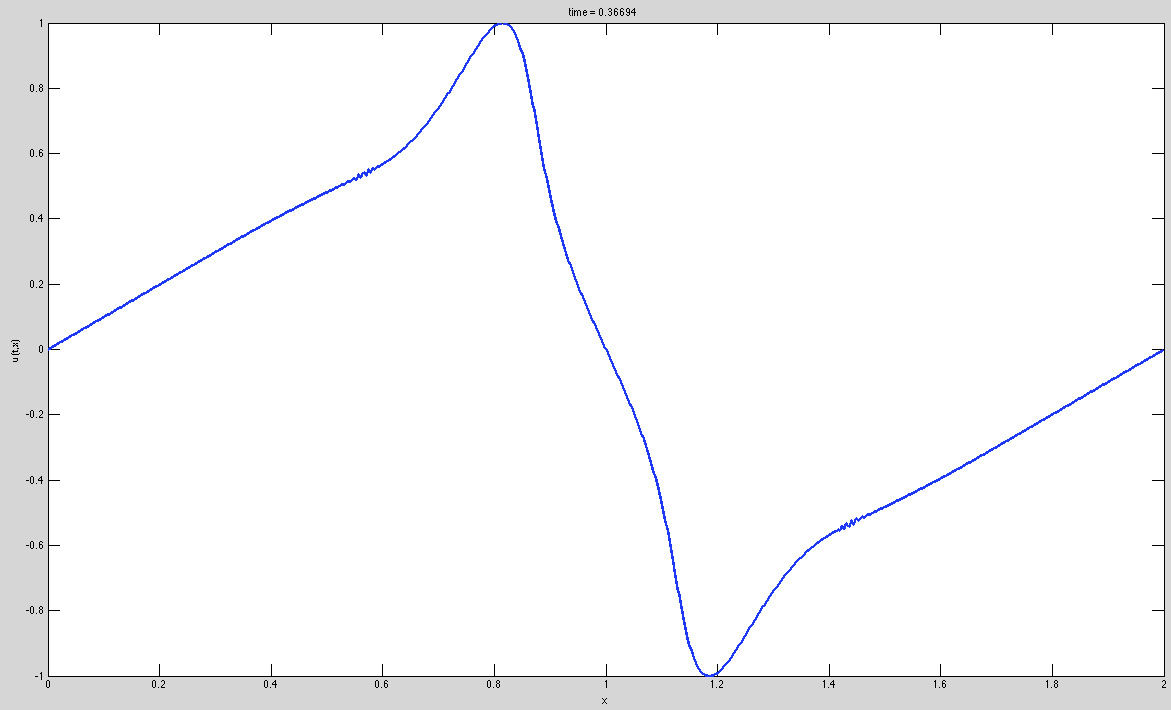}
  \caption{Nonlocal Burgers, $h = L/8$}
  \label{Nonlocal Burgers}
\end{minipage}
\end{figure}

As we can see, the slope of the graph in \ref{Burgers} at $x=0$ blows up in finite time. Now, considering our equation with the plus sign, $\InviscidBurgersNonlocalPlus{u}$, notice that there is a translation parameter $h$ in our equation which affects the location of blow up. As we can see in Figure \ref{Nonlocal Burgers}, with $h=L/8$ where $L$ is the period of the initial data, blow up does not occur at the origin, and two peaks form instead of the usual one.
Below, we varied the value of $h$ to be $h=L/16$ and $L/32$ in Figures \ref{L/16} and \ref{L/32}, respectively, which gives blow up closer and closer to the origin.

\begin{figure}[h]
\centering
\begin{minipage}[h!]{0.5\linewidth}
  \includegraphics[scale=0.2]{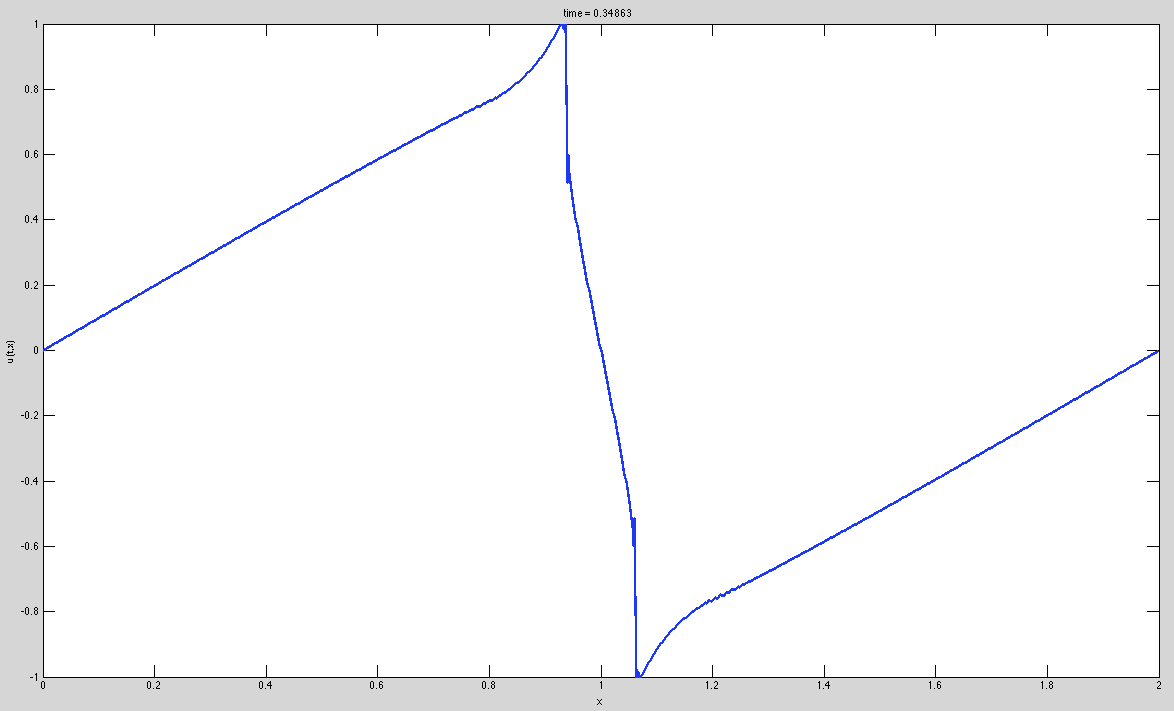}
  \caption{Nonlocal Burgers, $h=L/16$}
  \label{L/16}
\end{minipage}
\quad
\begin{minipage}[h]{0.45\linewidth}
  \includegraphics[scale=0.2]{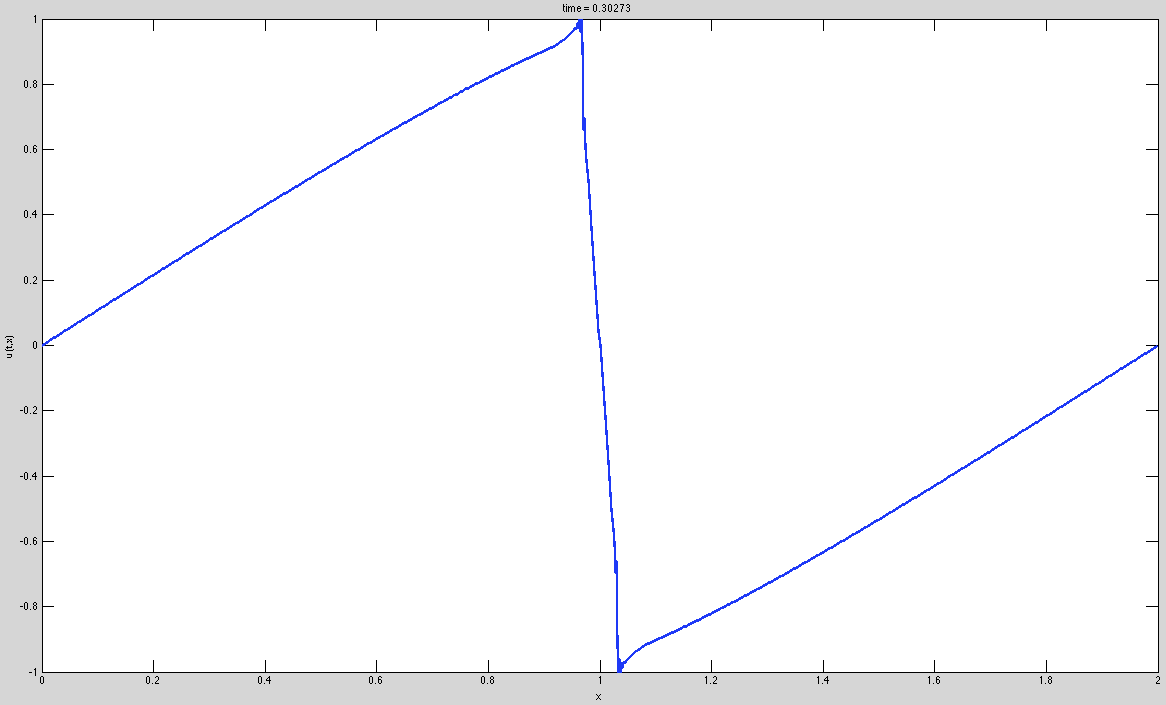}
  \caption{Nonlocal Burgers, $h=L/32$}
  \label{L/32}
\end{minipage}
\end{figure}

Now we constructed initial data to fit Lemma \ref{specific IC} to get intuition on how it will blow up at $x=\pm L/3, \pm 2L/3$ in the minus sign case. Figure \ref{IC} shows the initial data for our equation $\InviscidBurgersNonlocalMinus{u}$. Note how $u(x,0) = 0$ at $x=kh$, where period $L=6h$. Now in Figure \ref{blow up}, we see that at $x=\pm L/3, \pm 2L/3$, vertical lines form, causing blow up in slope.

\begin{figure}[h]
\centering
\begin{minipage}[h!]{0.5\linewidth}
  \includegraphics[scale=0.2]{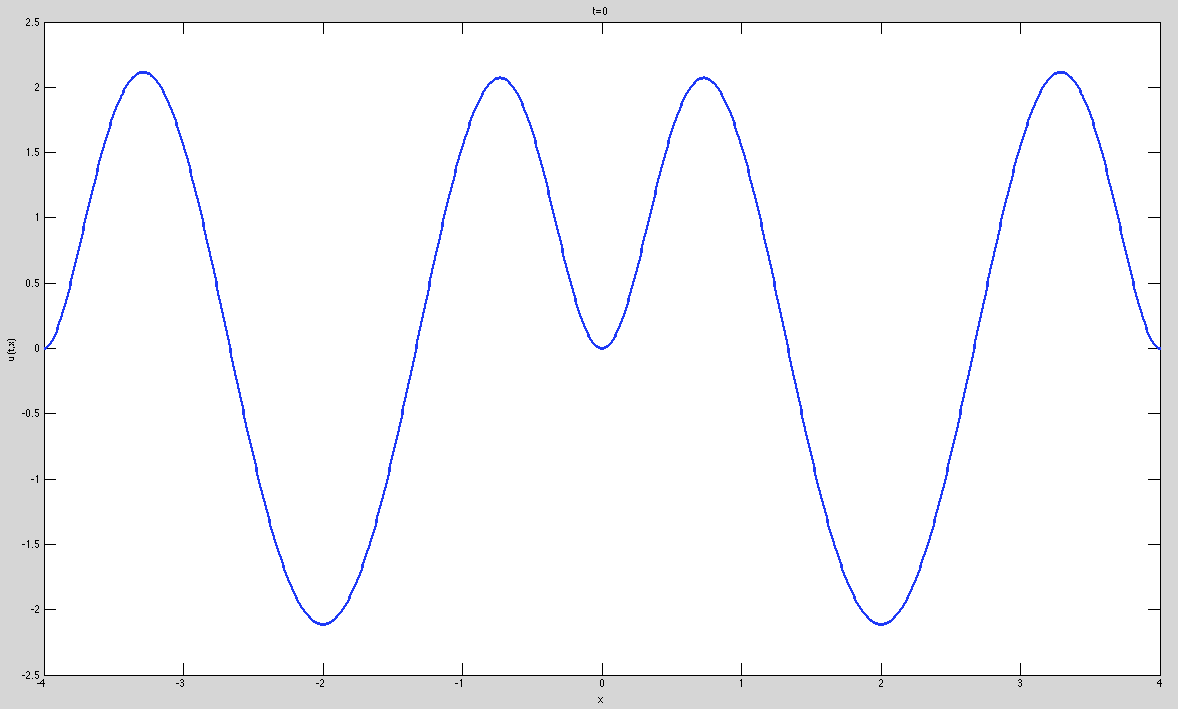}
  \caption{Nonlocal Burgers minus case initial condition}
  \label{IC}
\end{minipage}
\quad
\begin{minipage}[h]{0.45\linewidth}
  \includegraphics[scale=0.2]{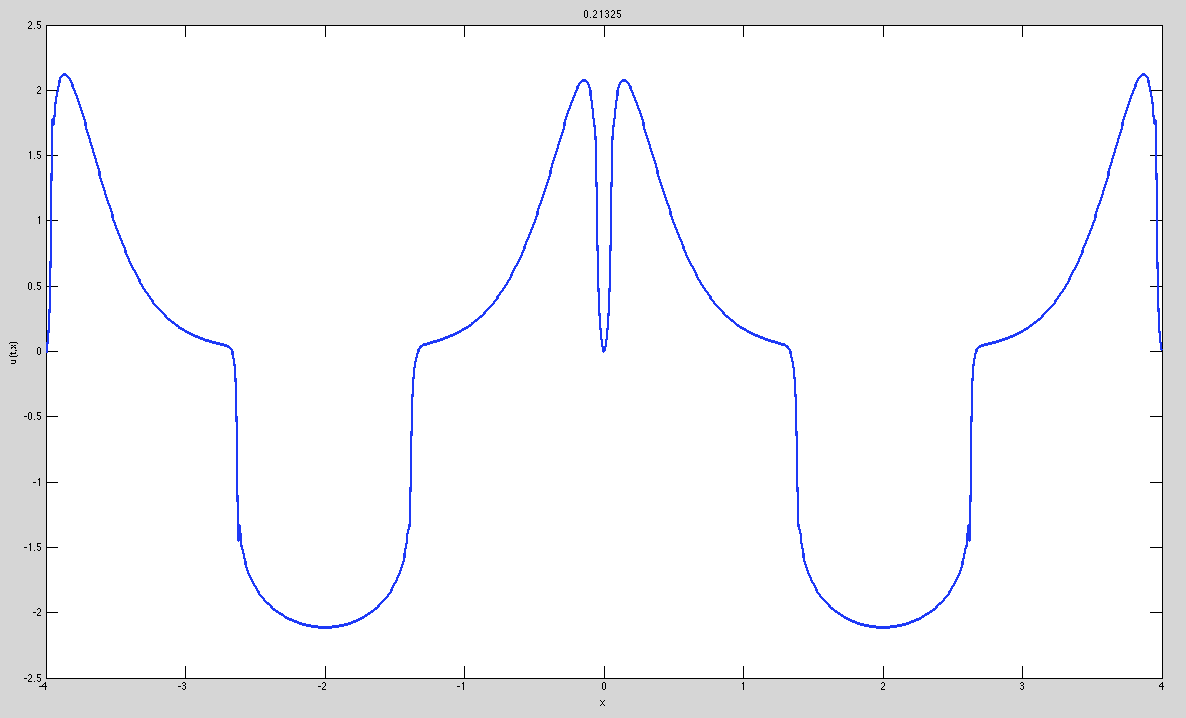}
  \caption{Nonlocal Burgers minus case in finite time}
  \label{blow up}
\end{minipage}
\end{figure}

\section*{Acknowledgement}
	Our research was done during the 2013 University of Wisconsin - Madison REU, sponsored by NSF grant RTG: Analysis and Applications 1147523. We would like to thank Professor Alexander Kiselev for his introduction of the topic and guidance throughout. We would also like to thank Kyudong Choi and Tam Do for their help with the many issues we ran in to.

\nocite{EvansPDE}
\bibliographystyle{alpha}
\bibliography{Bibliography}

\end{document}